\numberwithin{equation}{section}
\def\N{\mathbb{N}}
\def\R{\mathbb{R}}
\renewcommand\d{\partial}
\def\k{\kappa}
\def\epsilon{\varepsilon}
\def\e{\varepsilon}
\newcommand\br{\begin{rem}}
\newcommand\er{\end{rem}}
\newcommand\bp{\begin{pmatrix}}
\newcommand\ep{\end{pmatrix}}
\newcommand\be{\begin{equation}}
\newcommand\ee{\end{equation}}
\newcommand\ba{\begin{equation}\begin{aligned}}
\newcommand\ea{\end{aligned}\end{equation}}
\newtheorem{theorem}{Theorem}[section]
\newtheorem{proposition}[theorem]{Proposition}
\newtheorem{lemma}[theorem]{Lemma}
\newtheorem{example}[theorem]{Example}
\newtheorem{remark}[theorem]{Remark}
\title{Stability  properties and dynamics of solutions to  viscous conservation laws  with mean curvature operator} 
\begin{document}

\maketitle

\begin{center}
RAFFAELE FOLINO\footnote{Universit\` a degli Studi dell'Aquila, Dipartimento di Ingegneria e Scienze dell'Informazione e Matematica, L'Aquila (Italy), E-mail address: \texttt{raffaele.folino@univaq.it}}, MAURIZIO GARRIONE\footnote{Politecnico di Milano, Dipartimento di Matematica, Milano (Italy). E-mail address: 
\texttt{maurizio.garrione@polimi.it}}, MARTA STRANI\footnote{Universit\`a Ca' Foscari, Dipartimento di Scienze Molecolari e Nanosistemi, Venezia Mestre (Italy). E-mail address: \texttt{marta.strani@unive.it}}\\

\end{center}
\vskip1cm

\begin{abstract}

In this paper we study the long time dynamics of the solutions to an initial-boundary value problem for 
a scalar conservation law with a saturating  nonlinear diffusion.
After discussing the existence of a unique stationary solution and its asymptotic stability, we focus our attention on the phenomenon of {\it metastability}, whereby the time-dependent solution develops
into a layered function in a relatively short time, and subsequently approaches
a steady state in a very long time interval. Numerical simulations
illustrate the results.

\end{abstract}

\begin{quote}\footnotesize\baselineskip 14pt 
{\bf Key words.} 
Mean curvature operator, steady states, stability, metastability.
 \vskip.15cm
\end{quote}

\begin{quote}\footnotesize\baselineskip 14pt 
{\bf AMS subject classification.} 
35K20, 35B36, 35B40, 35P15
 \vskip.15cm
\end{quote}

\pagestyle{myheadings}
\thispagestyle{plain}
\markboth{R. FOLINO, M. GARRIONE, M. STRANI}{SLOW MOTION FOR BURGERS EQUATION}

\section{Introduction}
The asymptotic behavior of solutions to evolution PDEs of the form
\begin{equation}\label{sto}
\d_t u = \mathcal P^\e[u],
\end{equation}
where $\mathcal P^\e$ is a nonlinear differential operator that depends singularly on the parameter $\e$, has been widely studied in literature. 
It is quite common that the solution to \eqref{sto} approaches a stable steady state; 
according to the time needed for this behavior to occur, we are in presence of \emph{stability} - if the convergence is exponentially fast -  or \emph{metastability} - 
if the convergence takes place in an exponentially long time interval (that becomes longer the more the parameter $\e$ approaches zero). 

\smallbreak
\noindent In this paper, we are interested in studying stability and metastability properties of the solutions to a scalar conservation law with a nonlinear diffusion; 
precisely, given $\ell>0$ and $I=(-\ell,\ell)$, we consider the following initial-boundary value problem
\begin{equation}\label{NonLBurg}
 \left\{\begin{aligned}
		\partial_t u	& =\varepsilon\,\partial_x\left(\frac{\partial_x u}{\sqrt{1+(\d_x u)^2}}\right) - \partial_x f(u),
		&\qquad &x\in I, \,t > 0,\\
 		u(\pm\ell,t)&=u_\pm, &\qquad &t >0,\\
		u(x,0)		& =u_0(x), &\qquad &x\in I.
 	 \end{aligned}\right.
\end{equation}
As concerning the second-order operator on the right-hand side, we are thus considering a \emph{mean curvature-type diffusion}. 
The prescribed mean curvature equation has been object of interest since various decades, mainly due to its natural appearance when studying the minimal surface problem. 
In the context of reaction-diffusion models, mean curvature-type diffusions were introduced as examples of \emph{saturating diffusions} in works by Rosenau and co-authors 
(see e.g. \cite{GooKurRos, KurRos, Ros}), where they were essentially motivated by the need to restore the finiteness of the energy along sharp interfaces, thus allowing discontinuous solutions. 
Indeed, some differences with the linear diffusion case are already present at the level of traveling fronts, since discontinuous steady states here appear naturally (see, e.g., \cite{GarSan, GooKurRos, KurRos}). 

As for the convective term, we assume without loss of generality that $f \in C^2(\R)$ is such that $f(0)=0$; 
the main example we have in mind is a Burgers-type convection (namely, $f(u)=u^2/2$), even if many of the results we state can be extended to more general choices. 
It is worth underlining that usually, in the case of viscous conservation laws with linear diffusion like
\begin{equation}\label{LBurg}
 \d_t u = \e \d_x^2 u - \d_x f(u),
\end{equation}
the following conditions on $f$ are required:  
 \begin{equation}\label{introconv}
 f''(u)\geq c_0>0,\qquad f(u_+)=f(u_-)
	\quad \mbox{and} \quad f'(u_+)<0<f'(u_-).
 \end{equation}
These assumptions come from the study of the formal hyperbolic equation obtained by setting $\e=0$ in \eqref{LBurg} 
and guarantee that a jump with left value $u_-$ and right value $u_+< u_-$ satisfies the entropy condition
and has speed of propagation equal to zero, as dictated by the Rankine-Hugoniot relation; 
these are necessary conditions for the existence of a unique (possibly discontinuous) entropy solution $u^0$ to the hyperbolic  equation $\d_t u = - \d_x f(u)$, 
and it has been shown \cite{Lax54, Nes96} that the solution to \eqref{LBurg} converges to $u^0$  in $L^1_{{}_{\rm loc}}$ as $\e \to 0$.
In the present paper, \eqref{introconv} is no longer necessary since, as we will see more in details in Section 2, 
the presence of the saturating diffusion forces us to impose some smallness assumptions on the boundary values (with respect to $\e$) in order to ensure the existence of a \emph{smooth} solution. 
Hence, when passing to the limit for $\e \to 0$, we have convergence to the zero function
and we can thus allow both jumps from a value $u_-$ greater than $u_+$ and the opposite, that is, we have existence of both decreasing and increasing solutions (contrary to the linear case, see for instance \cite{MS}).

\smallbreak
The dynamics for the \emph{linear} equation \eqref{LBurg} can be described as follows (see Figure \ref{fig1}): 
starting from an initial datum connecting the boundary conditions, in a relatively short time a single interface located at some point of the interval is formed and, 
subsequently, such interface starts to drift towards its asymptotic configuration (i.e. a stable steady state for the system) with a speed rate of the order $\textrm{e}^{-c/\e}$, $c > 0$. 
This is an example of {\it metastable dynamics}, where two different time scales can be spotted in the dynamics: 
the first for the formation of the internal layers, the second for the exponentially slow convergence to the equilibrium solution.
\begin{figure}[h]
\centering
\includegraphics[width=10cm,height=6cm]{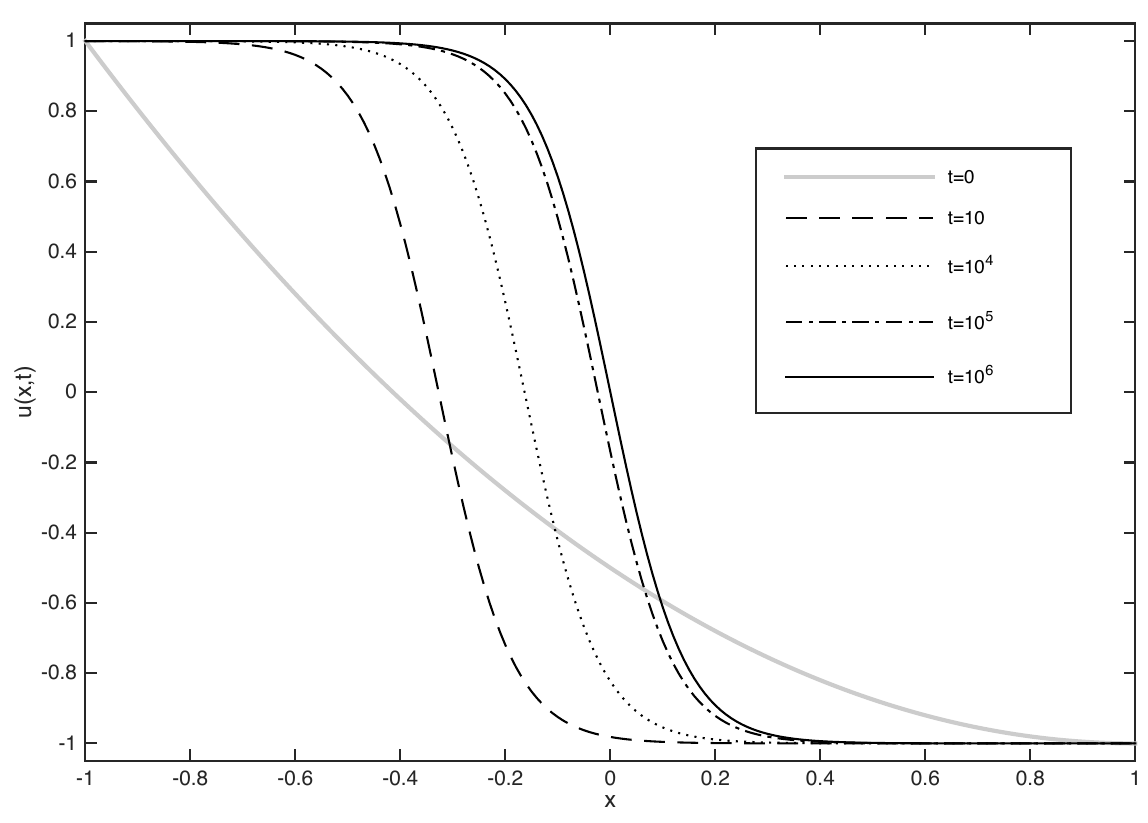}
\caption{\small{The solution to $\d_t u = \e \d_x^2 u -u\d_x u$ with  $\e=0.07$ and initial datum $u_0(x)= \frac{1}{2}x^2-x-\frac{1}{2}$ in grey. The motion of the time-dependent solution towards its asymptotic configuration, given  by the hyperbolic tangent centered in zero, takes place in an exponentially long time interval, and a metastable behavior is observed. }}\label{fig1}
\end{figure}

Many fundamental partial differential equations, coming from different fields of application, exhibit such fascinating behavior. 
Among others, we include viscous shock problems (see, for example \cite{LafoOMal94, LafoOMal95, ReynWard95, Str14bis} for viscous conservation laws, 
and \cite{BerKamSiv01, Str15, SunWard99} for Burgers type equations), phase transition problems described by the Allen-Cahn equation,
with the fundamental contributions \cite{CarrPego89, FuscHale89} and the most recent references \cite{OttoRezn06,Str13}, 
and the Cahn-Hilliard equation studied in \cite{AlikBateFusc91} and \cite{Pego89}. 
We finally quote some recent papers on metastability for hyperbolic versions of both the Allen-Cahn and the Cahn-Hilliard equation \cite{Fol, FLM1, FLM2}.

Motivated by the behavior of the solutions to the linear equation \eqref{LBurg}, we thus wonder if the dynamics for \eqref{NonLBurg} presents similar features. 
In order to examine if metastability occurs for such a problem, in the first part of this paper we will focus on the existence and uniqueness of a (monotone and classical) steady state for problem \eqref{NonLBurg}: 
it turns out that, due to the mean curvature type diffusive term, this is ensured only if some additional assumptions on the boundary conditions are imposed. 
In particular, our first result gives a description of the solution to
\begin{equation}\label{NonLBurgstaz}
 \left\{\begin{aligned}
		\varepsilon\,\partial_x\left(\frac{\partial_x u}{\sqrt{1+(\d_x u)^2}}\right) & =  \partial_x f(u),
		&\quad &x\in I,\\
 		u(\pm\ell)&=u_\pm,
 	 \end{aligned}\right.
\end{equation}
and can be sketched as follows.
\begin{theorem}
Fix $f\in C^1(\R)$ and $\varepsilon > 0$. 
Then, there exists a positive constant $C$ (whose explicit expression is given in Section \ref{sezSS}) 
such that a unique {\it decreasing} (resp., increasing) solution to \eqref{NonLBurgstaz} connecting $u_- > u_+$ (resp., $u_- < u_+$) exists if and only if 
$$
\max_{u \in [u_-, u_+]} f(u) - \min_{u \in [u_-, u_+]} f(u) < \e \qquad {\rm and } \qquad \ell>C.
$$
\end{theorem}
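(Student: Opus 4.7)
\emph{Proof sketch.} The plan is to reduce \eqref{NonLBurgstaz} to a first-order ODE by a single integration, parametrize its solutions by one integration constant $c$, and then use a shooting-type argument to match the boundary data.

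Integrating \eqref{NonLBurgstaz} once produces
\begin{equation*}
\varepsilon\,\frac{u'}{\sqrt{1+(u')^2}} = f(u) - c
\end{equation*}
for some $c\in\R$. Since the left-hand side is strictly bounded by $\varepsilon$ in modulus, any classical solution must satisfy $|f(u)-c|<\varepsilon$ throughout its trajectory. Looking for a decreasing profile (so $u'<0$ and $u$ takes values in $[u_+,u_-]$), the same identity forces $c>f(u)$ for every $u\in[u_+,u_-]$. Combining both constraints yields the admissibility window
\begin{equation*}
c\in\Bigl(\max_{[u_+,u_-]}f,\ \min_{[u_+,u_-]}f+\varepsilon\Bigr),
\end{equation*}
which is non-empty if and only if $\max f-\min f<\varepsilon$, proving the necessity of the first inequality in the statement.

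For each admissible $c$, I would solve algebraically for $u'$, separate variables, and integrate from $x=-\ell$ to $x=\ell$ (with $u$ moving from $u_-$ down to $u_+$) to obtain the compatibility identity
\begin{equation*}
2\ell \,=\, h(c) \,:=\, \int_{u_+}^{u_-}\frac{\sqrt{\varepsilon^2-(c-f(u))^2}}{c-f(u)}\,\diffe u.
\end{equation*}
The existence and uniqueness of a decreasing classical solution is then equivalent to $h(c)=2\ell$ having a unique root in the admissible window.

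The heart of the argument is the analysis of $h$. A direct computation shows that for each fixed $u$ the map $a\mapsto\sqrt{\varepsilon^2-a^2}/a$ is strictly decreasing on $(0,\varepsilon)$, hence so is $h$ in $c$. The one-sided limits then follow from elementary Taylor expansions: as $c\to\max f^+$, the integrand develops a non-integrable $1/a$-type singularity at any maximizer of $f$ in $[u_+,u_-]$, whence $h(c)\to+\infty$; conversely, as $c\to(\min f+\varepsilon)^-$, the numerator $\sqrt{\varepsilon^2-(c-f(u))^2}$ vanishes (at least linearly near any minimizer) while the denominator stays bounded below by a positive constant, so $h(c)$ converges to a finite value $C_*>0$. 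Thus $h$ is a strictly decreasing homeomorphism from the admissible window onto $(C_*,+\infty)$, and $h(c)=2\ell$ has a unique root precisely when $\ell>C:=C_*/2$, which identifies the threshold constant in the statement. The increasing case is entirely symmetric after choosing the other sign when extracting $u'$. The main obstacle is exactly the above asymptotic analysis at the two endpoints of the admissible range for $c$: the two limits correspond, respectively, to the failure of the smallness condition $\max f-\min f<\varepsilon$ and to the formation of a vertical tangent in the profile, in which case the candidate solution would cease to be classical.
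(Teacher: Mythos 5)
Your proposal is correct and follows essentially the same route as the paper's proof of Theorems \ref{teoincr}--\ref{teodecr}: integrate once, pin the integration constant to the window determined by $|f(u)-c|<\e$ and the sign of $u'$ (giving necessity of $M-m<\e$), separate variables to get $2\ell=h(c)$, and use strict monotonicity of $h$ together with the limits $h\to+\infty$ (non-integrable singularity at a maximizer, which as in the paper relies on $f\in C^1$) and $h\to C_*$ finite to conclude. Your $C_*$ is exactly the paper's $c_D$ (resp.\ $c_I$), so the threshold $C=C_*/2$ matches.
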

We notice that, when the flux function $f$ is chosen, the previous conditions may turn into smallness assumptions on the choice of the boundary values $u_\pm$
(compare, for instance, with \cite{CheKurRos05}).
\smallbreak
Once the existence of a solution to \eqref{NonLBurgstaz} is proven, we turn our attention to the investigation of its stability properties. 
We will see that the occurrence of either \emph{stability} or \emph{metastability} depends again on the choice of the flux function and of the boundary conditions. 

Precisely, in our second result (Theorem \ref{propstabilita1}) we show that, if the boundary data $u_\pm$ are sufficiently small, then stability of the unique (increasing or decreasing) steady state occurs, that is, 
\begin{equation*}
\Vert u(\cdot, t)-\bar u \Vert_{{}_{L^2}} \leq e^{-C_\e t}\Vert u_0-\bar u\Vert_{{}_{L^2}},
\end{equation*}
where $u$ and $\bar u$ are  the solution to \eqref{NonLBurg} and \eqref{NonLBurgstaz} respectively, and $C_\e$ is a positive constant depending on $\e$. 
Here the $L^2$--distance between the solution to \eqref{NonLBurg} and the steady state thus decays to zero {\it exponentially} as $t \to +\infty$. 

We then analyze the long-time dynamics of the solution to \eqref{NonLBurg} when the assumptions of Theorem \ref{propstabilita1} are not fulfilled; 
in order to prove that the aforementioned metastable behavior appears, the strategy we use closely resembles the one first performed in \cite{MS}, 
and is based on the construction of a {\it one-parameter  family of approximate stationary solutions}, denoted here by $\{ U^\e(x;\xi) \}_{\xi \in I}$ (for more details, see hypothesis {\bf H1} in Section~\ref{meta}). 
The key point of the strategy is to linearize the original system around the generic element $U^\e(x;\xi)$, 
where the parameter $\xi$ describes the reduced dynamics along this family and can be thought as the location of the internal interface of the solution. 
By letting $\xi= \xi(t)$ depend on time, we write the solution $u$ as
\begin{equation}\label{lin}
u(x,t)= U^\e(x;\xi(t))+v(x,t),
\end{equation}
and we describe the convergence of the solution $u$ towards its asymptotic configuration by following the evolution of $\xi(t)$ towards its equilibrium location 
(named here $\bar \xi$, so that $U^\e(x;\bar \xi)$ is an exact steady state for the problem).
Going deeper in details, in Section \ref{meta} we show  that the perturbation $v$ is small and the dynamics of $\xi$ can be described by the ODE $\xi'=\theta^\e(\xi)$, 
where $\theta^\e$ is a monotone decreasing function, satisfying $\theta^\e(\bar\xi)=0$ (for the precise definition of $\theta^\e$, see Section \ref{meta}).
Because of the decomposition \eqref{lin}, we have that the solution $u$ to \eqref{NonLBurg} is drifting to its equilibrium configuration with a speed dictated by the speed rate of convergence of $\xi$ towards $\bar \xi$; 
in particular, we show that in the case of a Burgers-type flux function $f$, if the boundary data $u_\pm$ are properly chosen, then $\theta^\e\to 0$ exponentially as $\e \to 0$, 
so that such convergence is much slower as $\e$ becomes smaller.
\medbreak
We close this introduction with a short plan of the paper. 
Section 2 is devoted to the study of the stationary problem \eqref{NonLBurgstaz}; 
precisely, in Theorems \ref{teoincr} and \ref{teodecr} we prove existence and uniqueness of an increasing and of a decreasing steady state. 
In Section 3, we deal with the well-posedness of problem \eqref{NonLBurg} and we give a result of asymptotic stability for the steady states (the aforementioned Theorem \ref{propstabilita1}). 
Finally, Sections \ref{meta:num} and \ref{meta} concern the study of the metastable behavior of the solution to \eqref{NonLBurg}, 
which is shown through some numerical evidences, as well.  

\section{Existence and uniqueness of the steady state}\label{sezSS}

\subsection{The problem on the whole line} \label{onde}
Before turning our attention to the steady states for problem \eqref{NonLBurg}, let us briefly comment about traveling wave-type solutions ($I=\mathbb{R}$) as a possible further motivation for our study.

In particular, in \cite{GarStr} the attention was devoted to \emph{heteroclinic wave fronts}, namely solutions $u(x, t) = v(x+ct)$ of a general reaction-convection-diffusion equation 
$$
\begin{aligned}
		\partial_t u	& =\partial_x\left(\frac{\partial_x u}{\sqrt{1+(\d_x u)^2}}\right) - \partial_x f(u) + g(u),
		&\qquad &x\in \R, \, t > 0, \\
 	 \end{aligned}
$$
defined on the whole $\mathbb{R}$ and connecting two different equilibria at $\pm \infty$ (namely, $v(-\infty)=u^-$, $v(+\infty)=u^+$). 
The numbers $c \in \mathbb{R}$ for which a solution of this type exists are called \emph{admissible speeds}. 
Notice that these solutions may somehow be seen as limits of Dirichlet and Neumann solutions on $[-L, L]$, for $L \to +\infty$.

When the reaction term is not present, it was shown in \cite[Section 4.2]{GarStr} that the unique admissible speed for which there exists a decreasing traveling front connecting $u_-$ and $u_+$ is given by 
$
c = \frac{f(u_-)-f(u_+)}{u_+ - u_-}, 
$
provided that $u_-$ and $u_+$ are sufficiently close (see also \cite{CheKurRos05}).  
Now, in presence of a parameter $\e$ in front of the diffusion, the problem is brought back to the study of the ODE 
$$
\varepsilon \left(\frac{v'}{\sqrt{1+(v')^2}}\right)' - (c+ f'(v)) v' = 0;
$$
since it immediately follows that 
$
\varepsilon \frac{v'}{\sqrt{1+(v')^2}} - cv -f(v)
$
is constant, it necessarily has to be again $c = \frac{f(u_-)-f(u_+)}{u_+ - u_-}$ and it turns out that 
$$
\frac{v'}{\sqrt{1+(v')^2}} = \frac{\frac{f(u_-)-f(u_+)}{u_+ - u_-} v + f(v)}{\varepsilon}.
$$
Being the left-hand side bounded and $\varepsilon$ small, 
this can be the case only if $u_-$ and $u_+$ are chosen sufficiently small (even closer with respect to the case $\e=1$), so that $f(v)$ will be small as well. 
This is a consequence of the fact that the saturation produces a weakness in the diffusion part, 
which is not able any more to counterbalance the convection with a regular solution if the states to be connected are too large. 
In the next subsection, we will notice this behavior also on a bounded interval (intuitively, we may somehow think that some information therein may be recovered by suitably truncating and rescaling a wave front).

\subsection{Steady states on a bounded interval}\label{stst}
We now consider the stationary problem for \eqref{NonLBurg}; 
we will deal with both increasing and decreasing stationary solutions, under general assumptions depending only on the two values 
$$
m=\min_{u \in [u_-, u_+]} f(u) \qquad {\rm and} \qquad M=\max_{u \in [u_-, u_+]} f(u)
$$ 
(notice that it will be $u_- > u_+$ or $u_- < u_+$ according to the monotonicity of the steady state).  
{As we will see in the following, such assumptions will be read as  restrictions on the choice of the boundary data}. 

Stationary solutions to \eqref{NonLBurg} solve the equation 
\begin{equation}\label{staz}
\frac{\e \d_x u}{\sqrt{1+(\d_x u)^2}}= f(u) + C,
\end{equation}
where $C \in \R$ is an integration constant that is uniquely determined once the boundary conditions $u(\pm \ell)=u_\pm$ are imposed. 
For the sake of briefness, we limit ourselves to consider the case  when either $\d_x u > 0$ or $\d_x u < 0$ everywhere. 

In case we look for \emph{increasing} steady states, it has to be $u_- < u_+$ and we have the following statement. 
\begin{theorem}\label{teoincr}
Fix $f\in C^1([u_-,u_+])$ and $\e > 0$.
Then, there exists a unique increasing (smooth) stationary solution of \eqref{NonLBurg} if and only if 
\begin{equation}\label{ipodatibordo}
	M-m < \e \qquad \textrm{ and } \qquad 2\ell>c_I:=\int_{u_-}^{u_+} \frac{\sqrt{(M-f(u))(f(u)+2\e-M)}}{f(u)+\e-M} \, du.
\end{equation}
In particular, one has
\begin{equation*}
	0<c_I\leq\frac{\sqrt2\e}{\e-(M-m)}(u_+-u_-).
\end{equation*}
\end{theorem}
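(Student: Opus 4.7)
My plan is to reduce the stationary equation \eqref{staz} to an explicit separable first-order ODE for $u$ and then convert the existence/uniqueness question into the invertibility of a single scalar function of the integration constant $C$. Since the map $s\mapsto s/\sqrt{1+s^{2}}$ is a diffeomorphism from $\R$ onto $(-1,1)$, solving \eqref{staz} for $\d_x u$ yields
\begin{equation*}
\d_x u=\frac{f(u)+C}{\sqrt{\e^{2}-(f(u)+C)^{2}}},
\end{equation*}
which makes sense precisely when $|f(u)+C|<\e$ on $[u_-,u_+]$. Requiring in addition $\d_x u>0$ forces $0<f(u)+C<\e$ throughout, that is, $-m<C<\e-M$; the nonemptiness of this interval is exactly the compatibility condition $\e>M-m$, i.e.\ the first inequality in \eqref{ipodatibordo}.

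I would then separate variables and integrate over $I$: the boundary condition $u(\pm\ell)=u_\pm$ becomes the scalar equation $L(C)=2\ell$, where
\begin{equation*}
L(C):=\int_{u_-}^{u_+}\frac{\sqrt{\e^{2}-(f(u)+C)^{2}}}{f(u)+C}\,du,\qquad C\in(-m,\e-M).
\end{equation*}
So existence and uniqueness of an increasing classical solution reduce to showing that $L$ is a continuous, strictly decreasing bijection from $(-m,\e-M)$ onto $(c_I,+\infty)$. Strict monotonicity follows from differentiation under the integral sign,
\begin{equation*}
L'(C)=-\e^{2}\int_{u_-}^{u_+}\frac{du}{(f(u)+C)^{2}\sqrt{\e^{2}-(f(u)+C)^{2}}}<0.
\end{equation*}
As $C\to(\e-M)^{-}$, the integrand converges monotonically (and remains dominated) to the integrand defining $c_I$, whose square-root singularity at the maximizers of $f$ is integrable, so $L(C)\to c_I$. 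As $C\to(-m)^{+}$, the integrand behaves like $\e/(f(u)+C)$ near any minimizer of $f$, which is not integrable (linearly if the minimum is attained at an endpoint, quadratically if it is interior); monotone convergence then gives $L(C)\to+\infty$. An application of the intermediate value theorem produces a unique admissible $C$ exactly when $2\ell>c_I$, and the necessity of both inequalities in \eqref{ipodatibordo} for the existence of a smooth increasing solution is read off from the same analysis.

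The final estimate on $c_I$ is just an elementary pointwise bound: the two factors $M-f(u)$ and $f(u)+2\e-M$ under the square root are nonnegative with $M-f(u)\le M-m<\e$ and $f(u)+2\e-M\le 2\e$, whence $\sqrt{(M-f(u))(f(u)+2\e-M)}<\sqrt{2}\,\e$, while the denominator satisfies $f(u)+\e-M\ge\e-(M-m)>0$; integrating over $[u_-,u_+]$ produces the stated inequality. The main technical point I anticipate is the blow-up of $L$ at $C=-m$, which requires a case distinction depending on whether the minimum of $f$ on $[u_-,u_+]$ is attained in the interior or at the boundary of the interval; in both cases the local analysis above is what drives the divergence and closes the argument.
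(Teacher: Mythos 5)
Your proposal is correct and follows essentially the same route as the paper: reduce \eqref{staz} to the separable ODE for $\d_x u$, observe that positivity of $\d_x u$ confines $C$ to $(-m,\e-M)$ (whose nonemptiness is $M-m<\e$), and show that the function $\Phi(C)$ (your $L$) is strictly decreasing with limits $+\infty$ at $-m$ and $c_I$ at $\e-M$, so that $\Phi(C)=2\ell$ has a unique solution iff $2\ell>c_I$. Your added details (the explicit formula for $L'(C)$, the case analysis for the non-integrability at $C=-m$, and the pointwise bound giving the estimate on $c_I$) are all accurate and merely make explicit what the paper leaves to monotone convergence and the regularity of $f$.
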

The two conditions in \eqref{ipodatibordo} are consistent with the discussion in the above subsection, 
enlightening once more the weakness of mean curvature-type diffusions (notice that this happens, with a milder control, also for $\e=1$). 
In particular, once $f$ is given, \eqref{ipodatibordo} forces us to choose ``sufficiently small''  boundary data.

\begin{proof}[Proof of Theorem \ref{teoincr}]
Since by assumption $\partial_x u > 0$ for every $x \in I$, it follows from \eqref{staz} that $C + m > 0$; 
on the other hand, since the left-hand side in \eqref{staz} is always smaller than $\e$, it has to be $C < \e - M$. 
It follows that the first condition in \eqref{ipodatibordo} has to be satisfied. 
We then search for $-m < C < \e-M$ such that \eqref{staz} has an increasing solution $u$; solving for $\partial_x u$ therein gives 
\begin{equation*}
	\d_x u = \frac{f(u)+C}{\sqrt{\e^2-(f(u)+C)^2}}. 
\end{equation*}
Stationary solutions are then implicitly defined by
\begin{equation}\label{ISS}
\int_{u_-}^{u(x)} \frac{\sqrt{\e^2-(f(u)+C)^2}}{f(u)+C} \, du= \int_{-\ell}^x \, ds = x+l.
\end{equation}
Setting 
\begin{equation}\label{definphi}
\Phi(C)=\int_{u_-}^{u_+} \frac{\sqrt{\e^2-(f(u)+C)^2}}{f(u)+C} \, du,
\end{equation} 
we observe that $\Phi$ is well defined, strictly positive and decreasing as a function of $C$. 
By the Monotone Convergence Theorem and the regularity of $f$, it holds 
\begin{equation}\label{eq:limits}
	\begin{aligned}
	\lim_{C \to -m} \Phi(C)&=\int_{u_-}^{u_+} \frac{\sqrt{\e^2-(f(u)-m)^2}}{f(u)-m} \, du=+\infty , \\ 
	\lim_{C \to \e-M} \Phi(C)&=\int_{u_-}^{u_+} \frac{\sqrt{(M-f(u))(f(u)+2\e-M)}}{f(u)+\e-M} \, du=:c_I. 
\end{aligned}
\end{equation}
Thus, there exists (a unique) $C$ with $-m < C < \e - M$ such that $\Phi(C)=2\ell$ (and \eqref{ISS} is satisfied) if and only if \eqref{ipodatibordo} is fulfilled.
\end{proof}

Observe that the first limit of \eqref{eq:limits} is equal to $+\infty$ because of the regularity of $f$;
in case $f\notin C^1$ it could be finite and we would have also a restriction from above for the choice of $\ell$ in \eqref{ipodatibordo}.

On the other hand, reasoning on \emph{decreasing} steady states, we have to assume that $u_- > u_+$ and the following result holds. 
\begin{theorem}\label{teodecr}
Fix $f\in C^1([u_+,u_-])$ and $\e > 0$.
Then, there exists a unique decreasing (smooth) stationary solution of \eqref{NonLBurg} if and only if 
$$
M-m < \e \qquad \textrm{ and } \qquad 2\ell>c_D:=\int_{u_+}^{u_-} \frac{\sqrt{(f(u)-m)(2\e+m-f(u))}}{m+\e-f(u)} \,du.
$$
In particular, one has
\begin{equation*}
	0<c_D\leq\frac{\sqrt2\e}{\e-(M-m)}(u_+-u_-).
\end{equation*}
\end{theorem}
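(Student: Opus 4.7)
The plan is to mirror the proof of Theorem \ref{teoincr}, adapting each step to the decreasing monotonicity. I start again from \eqref{staz}: for a decreasing solution one must have $\d_x u < 0$, hence $f(u) + C < 0$ throughout $[u_+, u_-]$, forcing $C < -M$. On the other hand, the left-hand side of \eqref{staz} is bounded below by $-\e$, which gives $C > -\e - m$. The compatibility of these two constraints, $-\e - m < C < -M$, is equivalent to $M - m < \e$, thus recovering the first condition in the statement.

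Next, I would solve \eqref{staz} for $\d_x u$, selecting the negative root to enforce monotonicity, obtaining
$$\d_x u = \frac{f(u)+C}{\sqrt{\e^2 - (f(u)+C)^2}} < 0.$$
Separating variables and integrating between $-\ell$ and $\ell$ (so as to use $u(-\ell) = u_-$ and $u(\ell) = u_+$), I introduce
$$\Psi(C) = \int_{u_+}^{u_-} \frac{\sqrt{\e^2 - (f(u)+C)^2}}{-(f(u)+C)}\, du, \qquad C \in (-\e - m,\, -M),$$
so that the existence of a decreasing stationary solution with the prescribed boundary values is equivalent to finding $C$ with $\Psi(C) = 2\ell$.

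The heart of the argument is the analysis of $\Psi$, and this is the step requiring the most care. Strict monotonicity is straightforward: as $C$ increases, $|f(u)+C|$ decreases while $\e^2 - (f(u)+C)^2$ increases, so the integrand grows pointwise and $\Psi$ is strictly increasing, which gives uniqueness. For the boundary behavior, exploiting the $C^1$-regularity of $f$ exactly as in Theorem \ref{teoincr}, I would prove
$$\lim_{C \to -M^-} \Psi(C) = +\infty, \qquad \lim_{C \to (-\e - m)^+} \Psi(C) = c_D,$$
the second one using the algebraic identity $\e^2 - (f(u) - \e - m)^2 = (f(u)-m)(2\e + m - f(u))$ to recognize the integrand defining $c_D$, and the first one by isolating the non-integrable singularity of $1/(M-f(u))$ at the points where $f$ attains its maximum. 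By continuity, $\Psi(C) = 2\ell$ then admits a (unique) solution in $(-\e-m,-M)$ if and only if $2\ell > c_D$.

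Finally, the quantitative bound on $c_D$ would follow by estimating the numerator through $(f(u)-m)(2\e + m - f(u)) \leq (M-m)\cdot 2\e \leq 2\e^2$ (using $M-m < \e$) and the denominator through $\e + m - f(u) \geq \e - (M-m) > 0$, yielding $c_D \leq \tfrac{\sqrt{2}\,\e}{\e - (M-m)}\,|u_+ - u_-|$. Beyond this routine estimate, the only genuinely subtle point throughout is keeping signs consistent: the quantity $f(u)+C$ is now negative, which flips the sign of the integrand with respect to the increasing case and accordingly rearranges the admissible range of $C$. Apart from this bookkeeping, the proof is a symmetric reprise of the one already carried out for Theorem \ref{teoincr}.
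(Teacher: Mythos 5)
Your proof is correct and follows exactly the route the paper intends: the paper omits the proof of Theorem \ref{teodecr}, stating only that it is analogous to that of Theorem \ref{teoincr}, and your adaptation (constraining $C$ to the interval $(-\e-m,-M)$, whose nonemptiness is equivalent to $M-m<\e$, then analyzing the monotone function $\Psi$ and computing its limits $+\infty$ and $c_D$ at the two endpoints) is the correct symmetric counterpart. As a minor bonus, writing the final estimate with $|u_+-u_-|$ silently corrects the sign in the paper's stated bound, where the factor $(u_+-u_-)$ is negative in the decreasing case.
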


The proof of Theorem \ref{teodecr} is similar to the one of Theorem \ref{teoincr}.
Notice that the condition $M-m < \e$ has to hold true in both the increasing and the decreasing case, meaning that regular transitions are possible only if the boundary data are sufficiently small.
\begin{example}{\rm Assuming $f(u)=u^2/2$ (namely, $f$ is a Burgers flux) and $u_\pm=\pm u_*$ or $u_\pm=\mp u_*$ for some $u_* >0$, 
according to whether we search for increasing or decreasing steady states, we have $m=0$,  and the assumptions of Theorems \ref{teoincr} and \ref{teodecr} read as
$$
\frac{u^2_*}{2} < \e \quad \textrm{ and } \quad   \begin{cases} 2 \ell > c_I \quad \mbox{for the increasing case},  \\
 2\ell > c_D\quad \mbox{for the decreasing case},
\end{cases} 
$$
where $c_I$ and $c_D$ go to zero as $\e \to 0$.
}
\end{example}

{Finally, we notice that in fact non-monotone solutions of \eqref{NonLBurgstaz} cannot exist. Rewriting the differential equation in \eqref{NonLBurgstaz} as
\begin{equation}\label{sist}
 \left\{\begin{aligned}
		u' & = \frac{v}{\sqrt{1-v^2}}  \\
		v'&=\frac{f'(u)}{\varepsilon} \frac{v}{\sqrt{1-v^2}},
 	 \end{aligned}\right.
\end{equation}
we see indeed that for this system of ODEs each point of the axis $\{v=0\}$ is an equilibrium (i.e., it is invariant for the dynamics). If a non-monotone solution $u$ of \eqref{NonLBurgstaz} existed, in the phase plane $(u,v)$ it would correspond to a solution of \eqref{sist} lying in both the regions $\{v >0\}$ and $\{v<0 \}$, but thus it would cross the axis $v=0$ and this is not possible because of the uniqueness of the solution.}


\section{Stability properties of the steady states} 
We are here interested in the asymptotic behavior of the classical solutions to \eqref{NonLBurg} for large times: 
the main goal is to prove their convergence to the steady state found in Section \ref{stst} for $t \to +\infty$.
\subsection{Global existence for the initial-boundary value problem.} Of course, the first issue that has to be addressed is to study the solvability of the Cauchy-Dirichlet problem 
\begin{equation}\label{burgers}
 \left\{\begin{aligned}
		\partial_t u	& =\varepsilon\,\partial_x\left(\frac{\partial_x u}{\sqrt{1+(\d_x u)^2}}\right) - \partial_x f(u),
		&\qquad &x\in I, t > 0,\\
 		u(\pm\ell,t)&=u_\pm, &\qquad &t >0,\\
		u(x,0)	& =u_0(x), &\qquad &x\in I.
 	 \end{aligned}\right.
\end{equation}
Finding a global existence result is here not a trivial matter. 
Indeed, quasilinear differential equations ruled by the curvature operator (or more in general by saturating diffusions) may display blow-up of the solutions, see \cite{GooKurRos,KurRos}. In this direction, it is worth mentioning the recent paper \cite{Zhang}, where the author shows  that the Dirichlet problem for a curvature flow with driving force blows up with large initial data.
\smallbreak
When dropping the boundary requirement and considering the Cauchy problem on the whole real line with ``small'' initial data, 
some results were obtained in \cite[Theorem 3.3]{KurRos}, where the authors remark the difficulties in giving a well-posedness result for general initial data. 
We here recall such a statement.
\begin{theorem}\label{EUG}
Consider the problem 
\begin{equation}\label{problema}
\left\{
\begin{array}{l}
\displaystyle \partial_t u = \e \,\partial_x \left(\frac{\partial_x u}{\sqrt{1+\partial_x u^2}}\right) + \partial_x f(u) \vspace{0.1cm}\\
u(x, 0) = u_0(x).
\end{array}
\right.
\end{equation}
If $u_0 \in C^3$ and there exists $\alpha > 0$ such that 
\begin{equation*}
\e \left\Vert \frac{\partial_x u_0}{\sqrt{1+(\partial_x u_0)^2}} \right\Vert_{{}_{L^\infty}} + 2 \Vert f(u_0) \Vert_{{}_{L^\infty}} \leq \alpha < \e,
\end{equation*}
then there exists a unique global classical solution $u(x, t) \in C^{2, 1}$ of \eqref{problema}. 
\end{theorem}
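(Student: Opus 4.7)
The plan is to combine short-time existence for the quasilinear parabolic Cauchy problem with a global a priori $L^\infty$ bound on $\partial_x u$ that prevents the saturating diffusion from degenerating, which is the only obstruction to global existence. Standard parabolic theory applied to the $C^3$ initial datum yields a unique local classical solution $u\in C^{2,1}(\mathbb{R}\times[0,T^*))$ on a maximal existence interval $[0,T^*)$; the task is to show that $|\partial_x u|$ stays uniformly bounded on this interval and then $T^*=+\infty$ follows from the usual continuation criterion for uniformly parabolic quasilinear equations.

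The key structural observation is to write the equation in conservative form $\partial_t u=\partial_x Q$ with the generalized flux
\begin{equation*}
Q := \e\,\frac{\partial_x u}{\sqrt{1+(\partial_x u)^2}}+f(u),
\end{equation*}
and to note that, using $\partial_t(\partial_x u)=\partial_x^2 Q$, a direct differentiation shows
\begin{equation*}
\partial_t Q=\frac{\e}{(1+(\partial_x u)^2)^{3/2}}\,\partial_x^2 Q+f'(u)\,\partial_x Q,
\end{equation*}
whose coefficients are continuous and bounded on $[0,T^*)$. A parallel computation gives that $u$ itself solves a linear parabolic equation of the same form. Applied to these bounded $C^{2,1}$ solutions, the classical maximum principle on $\mathbb{R}$ yields
\begin{equation*}
\|Q(\cdot,t)\|_{L^\infty}\le \|Q(\cdot,0)\|_{L^\infty},\qquad \|f(u(\cdot,t))\|_{L^\infty}\le \|f(u_0)\|_{L^\infty}.
\end{equation*}

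Combining the two bounds via the triangle inequality $\e\,|\partial_x u|/\sqrt{1+(\partial_x u)^2}\le |Q|+|f(u)|$ and estimating $\|Q_0\|_{L^\infty}$ by the sum of its two contributions reproduces precisely the quantity in the hypothesis, with the factor two in front of $\|f(u_0)\|_{L^\infty}$ accounting for the copy already contained in $\|Q(\cdot,0)\|_{L^\infty}$. Hence $\e\,|\partial_x u|/\sqrt{1+(\partial_x u)^2}\le\alpha<\e$ on $[0,T^*)$, from which $|\partial_x u|\le \alpha/\sqrt{\e^2-\alpha^2}$ uniformly. The diffusion coefficient is then bounded below by a positive constant and the equation becomes uniformly parabolic with smooth bounded coefficients; Schauder estimates on compact sets deliver the higher regularity needed to continue the solution past $T^*$, so $T^*=+\infty$. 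Uniqueness is then the standard comparison argument for classical solutions of uniformly parabolic equations with the a priori Lipschitz bound just obtained.

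The main obstacle is spotting the auxiliary quantity $Q$ and recognising that it satisfies a linear non-divergence parabolic equation: once this identity is in hand, the rest is a routine application of the maximum principle plus a standard continuation argument. The smallness assumption cannot be dropped, since the saturating diffusion degenerates as $|\partial_x u|\to\infty$ and only the condition $\alpha<\e$ closes the loop between the maximum-principle bound on $Q$ and the desired a priori bound on $\partial_x u$, in accordance with the blow-up phenomena for curvature-operator equations documented in \cite{GooKurRos,KurRos}.
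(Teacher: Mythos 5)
Your argument is correct in substance, and its analytic core --- the auxiliary flux $Q=\e\,\partial_x u/\sqrt{1+(\partial_x u)^2}+f(u)$, the identity $\partial_t Q=\e\,(1+(\partial_x u)^2)^{-3/2}\partial_x^2Q+f'(u)\,\partial_x Q$, and the maximum-principle bound $\Vert Q(\cdot,t)\Vert_{L^\infty}\le\Vert Q(\cdot,0)\Vert_{L^\infty}$ combined with $\Vert f(u(\cdot,t))\Vert_{L^\infty}\le\Vert f(u_0)\Vert_{L^\infty}$ to recover exactly the quantity in \eqref{eq:smallness} with its factor $2$ --- is precisely the mechanism the paper uses (with the opposite sign of $f$) in \eqref{eq:z}--\eqref{eq:z_t} to prove the IBVP analogue, Proposition \ref{prop:aprioriux}. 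Where you genuinely diverge is in how the gradient bound is turned into global existence: the paper does not prove Theorem \ref{EUG} itself but quotes it from \cite{KurRos}, where the construction is by vanishing viscosity --- add $\delta\,\partial_x^2u$, solve globally the uniformly parabolic regularized problems, establish the same $\delta$-uniform estimates, and pass to the limit $\delta\to0^+$ --- whereas you run local existence on a maximal interval plus a continuation criterion. Your packaging is the more standard one and avoids the compactness argument in the limit $\delta\to0$, at the price of invoking a local well-posedness theorem for the quasilinear Cauchy problem on the whole line; the route of \cite{KurRos} only ever solves nondegenerate problems and so needs no such citation. Two points you should make explicit if writing this up: the maximum principle on the unbounded domain $\R\times[0,T]$ requires the solution and the coefficients to be bounded there (true for $T<T^*$, but it is the Phragm\'en--Lindel\"of version for bounded solutions that you are using, and the diffusion coefficient is only known to be bounded \emph{above} until the gradient bound is closed); and the inequality $\Vert f(u(\cdot,t))\Vert_{L^\infty}\le\Vert f(u_0)\Vert_{L^\infty}$ follows from $\inf u_0\le u\le\sup u_0$ only because the closure of the range of the continuous function $u_0$ is the whole interval $[\inf u_0,\sup u_0]$, so the supremum of $|f|$ over that interval equals $\Vert f(u_0)\Vert_{L^\infty}$.
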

Theorem \ref{EUG} was proved using the vanishing viscosity method, i.e., 
inserting a small regularizing part $\delta \partial_x^2 u$ inside the equation and providing uniform estimates on the corresponding sequence of (unique, regular and global) solutions $u_\delta$ for small $\delta$'s. 
Passing to the limit for $\delta\to0^+$, the authors obtained the unique classical solution of \eqref{problema}.
In particular, in \cite{KurRos} it is shown that the solution of \eqref{problema} preserves the smallness condition satisfied by the initial datum and then the derivative $\partial_x u$ remains bounded for all times.
A similar result holds in the case of classical solutions to the IBVP \eqref{burgers}.
\begin{proposition}\label{prop:aprioriux}
Let $\e, T > 0$ be fixed and let $u(\cdot,t)\in C^3(I)$ be a classical solution of the IBVP \eqref{burgers}, for $t\in[0,T]$.
If
\begin{equation}\label{eq:smallness}
\e\left\|\frac{u'_0}{\sqrt{1+(u'_0)^2}}\right\|_{{}_{L^\infty}}+2\Vert f(u_0)\Vert_{{}_{L^\infty}}\leq\alpha<\e,
\end{equation}
then 
\begin{equation}\label{stimaux}
\Vert \partial_x u(\cdot,t)\Vert_{{}_{L^\infty}}\leq h^{-1}(\alpha/\e),
\end{equation}
for any $t\in[0,T]$, being $h(s)=\frac{s}{\sqrt{1+s^2}}$.
\end{proposition}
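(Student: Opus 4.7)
The plan is to bring the problem to a maximum-principle argument for the nonlinear flux
\[
w := \e\,\frac{\partial_x u}{\sqrt{1+(\partial_x u)^2}} - f(u),
\]
for which $\partial_t u = \partial_x w$ follows directly from the equation in \eqref{burgers}. Differentiating in $t$ and using $\partial_{xt}u = \partial_x^2 w$ together with $\partial_t u=\partial_x w$, one finds that $w$ solves the \emph{linear} parabolic equation
\[
\partial_t w = a(x,t)\,\partial_x^2 w - f'(u)\,\partial_x w, \qquad a(x,t):=\frac{\e}{(1+(\partial_x u(x,t))^2)^{3/2}}>0.
\]
Moreover, the Dirichlet condition $u(\pm\ell,t)\equiv u_\pm$ forces $\partial_t u(\pm\ell,t)\equiv 0$ and hence attaches the homogeneous Neumann condition $\partial_x w(\pm\ell,t)\equiv 0$.

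Since $u\in C^3$ in space and $C^1$ in time on $[0,T]$, the coefficient $a$ is continuous and pointwise positive on the compact set $\bar I\times[0,T]$, so the standard parabolic maximum principle (together with Hopf's lemma at the lateral Neumann boundary, which rules out extrema there) gives
\[
\|w(\cdot,t)\|_{L^\infty}\leq \|w(\cdot,0)\|_{L^\infty}\leq \e\left\|\frac{u_0'}{\sqrt{1+(u_0')^2}}\right\|_{L^\infty}+\|f(u_0)\|_{L^\infty}.
\]
In parallel, applying the same maximum principle to the equivalent quasilinear form $\partial_t u = a(x,t)\partial_x^2 u - f'(u)\partial_x u$ of \eqref{burgers} produces the two-sided bound $\min_{\bar I} u_0\leq u(x,t)\leq \max_{\bar I} u_0$ (boundary data being absorbed into the initial data via the compatibility $u_0(\pm\ell)=u_\pm$). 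By continuity of $u_0$ and the intermediate value theorem, the range of $u(\cdot,t)$ is contained in the range of $u_0$, whence $\|f(u(\cdot,t))\|_{L^\infty}\leq \|f(u_0)\|_{L^\infty}$.

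Combining these two estimates with the pointwise identity $\e\,\partial_x u/\sqrt{1+(\partial_x u)^2}=w+f(u)$ yields
\[
\e\left|\frac{\partial_x u(x,t)}{\sqrt{1+(\partial_x u(x,t))^2}}\right|\leq\e\left\|\frac{u_0'}{\sqrt{1+(u_0')^2}}\right\|_{L^\infty}+2\|f(u_0)\|_{L^\infty}\leq\alpha<\e,
\]
where the factor $2$ in front of $\|f(u_0)\|_{L^\infty}$ in the smallness condition \eqref{eq:smallness} is precisely what allows the double occurrence of $\|f(u_0)\|_{L^\infty}$ (once through $w(\cdot,0)$ and once through the bound on $f(u)$). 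Solving this inequality algebraically for $\partial_x u$ delivers \eqref{stimaux} with $C_0=\alpha/\sqrt{\e^2-\alpha^2}$. The main technical point to handle carefully is the applicability of the maximum principle to the equation for $w$: the coefficient $a$ degenerates as $|\partial_x u|\to\infty$, which is exactly the quantity one is trying to control. This is not circular because we have fixed a classical $C^{2,1}$ solution on $[0,T]$, for which $\partial_x u$ is automatically continuous, and hence bounded, on the compact set $\bar I\times[0,T]$, so that $a$ has a strictly positive lower bound there — enough to run the classical parabolic comparison argument.
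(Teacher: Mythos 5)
Your proof is correct and follows essentially the same route as the paper's: the same auxiliary flux $z=\e\,\partial_x u/\sqrt{1+(\partial_x u)^2}-f(u)$, the same linear parabolic equation $\partial_t z = \e(1+(\partial_x u)^2)^{-3/2}\partial_x^2 z - f'(u)\partial_x z$, and the same combination of the maximum principle with the monotonicity of $s\mapsto s/\sqrt{1+s^2}$ to conclude $C_0=h^{-1}(\alpha/\e)$. The only differences are organizational: you argue directly (using continuity of $\partial_x u$ on the compact cylinder to justify uniform parabolicity) where the paper runs a continuation-by-contradiction argument, and you make explicit two details the paper leaves implicit, namely the lateral Neumann condition $\partial_x z(\pm\ell,t)=0$ with Hopf's lemma, and the range argument yielding $\Vert f(u(\cdot,t))\Vert_{L^\infty}\le\Vert f(u_0)\Vert_{L^\infty}$.
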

Let us compare conditions \eqref{eq:smallness} and \eqref{ipodatibordo}: since by \eqref{eq:smallness} one has
\begin{equation*}
M-m\leq 2\|f(u_0)\|_{{}_{L^\infty}}<\e,	
\end{equation*}
assumption \eqref{eq:smallness} implies condition \eqref{ipodatibordo}, which we had to impose in order to obtain the existence of regular steady states. 
\begin{proof}[Proof of Proposition \ref{prop:aprioriux}]
Similarly as in \cite{KurLevRos, KurRos}, we define the function
\begin{equation}\label{eq:z}
z(x,t):=\frac{\varepsilon\partial_x u(x,t)}{\sqrt{1+(\d_x u(x,t))^2}}- f(u(x,t)),
\end{equation}
where $u$ is the classical solution of \eqref{burgers}.
Therefore, we can rewrite the first equation of \eqref{burgers} as
\begin{equation*}
\d_t u=\d_x z.
\end{equation*}
Differentiating equation \eqref{eq:z} with respect to $t$, we deduce
\begin{equation}\label{eq:z_t}
\d_t z=\frac{\e \d^2_xz}{\left\{1+(\d_x u(x,t))^2\right\}^{3/2}}-f'(u)\d_x z,
\end{equation}
with homogenous Neumann boundary conditions $\partial_x z(\pm \ell, t)=0$.
Equation \eqref{eq:z_t} and the differential equation in \eqref{burgers} remain parabolic for $t\in[0,T]$  so that,
thanks to the maximum principle, it holds 
\begin{equation}\label{aprioriu} 
\Vert u(\cdot,t)\Vert_{{}_{L^\infty}}\leq\Vert u_0\Vert_{{}_{L^\infty}},
\end{equation}
for any $t\in[0,T]$; moreover, since  the constants $C_\pm:= \pm \Vert z(\cdot,0)\Vert_{{}_{L^\infty}}$ are solutions to \eqref{eq:z_t}, by a standard comparison principle it follows
\begin{equation*}
\Vert z(\cdot,t)\Vert_{{}_{L^\infty}}\leq\Vert z(\cdot,0)\Vert_{{}_{L^\infty}}=\left\Vert \frac{\varepsilon u'_0}{\sqrt{1+(u'_0)^2}}- f(u_0)\right\Vert_{{}_{L^\infty}},
\end{equation*}
for any $t\in[0,T]$. 
Therefore, using \eqref{eq:smallness} and \eqref{aprioriu}, we obtain
\begin{equation*}
\varepsilon\left\Vert\frac{\partial_x u(x,t)}{\sqrt{1+(\d_x u(x,t))^2}}\right\Vert\leq\alpha<\e,
\end{equation*}
for any $t\in[0,T]$. 
Since the function $h(s)=\frac{s}{\sqrt{(1+s^2)}}$ is increasing and satisfies $h(\pm\infty)=\pm1$, we conclude
\begin{equation*}
\Vert \partial_x u(\cdot,t)\Vert_{{}_{L^\infty}}\leq h^{-1}(\alpha/\e),
\end{equation*}
for any $t\in[0,T]$, and the proof is complete.
\end{proof}
\begin{remark}{\rm Notice that the constant $h^{-1}(\alpha/\e)$ in \eqref{stimaux} can be chosen independently on $\e,T$ if the initial datum $u_0$ is sufficiently small.
Indeed, if we choose $\alpha=\frac34\e$ in \eqref{eq:smallness}, then the estimate \eqref{stimaux} holds with $h^{-1}(3/4)$.
}\label{rem:C0}
\end{remark}

Proposition \ref{prop:aprioriux} provides an a priori estimate on the spatial derivative of the classical solution to \eqref{burgers}. 
This result can be used in order to obtain a global existence result;
indeed, the local existence of a classical solution (as coming, e.g., from \cite[Theorem 8.2]{Lie}), 
together with the bound \eqref{stimaux}, allows us to prove the existence of a global classical solution satisfying the estimates \eqref{stimaux} and \eqref{aprioriu} for any $t\geq0$. 
On the contrary, the behavior of the solutions when condition \eqref{eq:smallness} is not satisfied has been investigated in \cite{GooKurRos}, 
both for the Cauchy problem \eqref{problema} and the IBVP \eqref{burgers}. 
In this case, the solution may develop discontinuities in a finite time; 
in particular, for certain flux functions $f$ and large initial data $u_0$, there exists a finite breaktime $T > 0$ such that 
\begin{equation*}
	\lim_{t\to T^-}\Vert \d_x u(\cdot,t)\Vert_{{}_{L^\infty}}=+\infty.
\end{equation*}
Moreover, it was numerically shown in \cite{GooKurRos} that both continuous and discontinuous steady states are strong attractors of a wide class of initial data.

Anyway, a complete discussion about the well-posedness of the IBVP \eqref{burgers} and about the stability of discontinuous steady states is beyond the scope of this paper, 
since here we are interested in studying the long time behavior of classical solutions and their metastable dynamics.
Therefore, from now on we assume that the initial datum $u_0$ satisfies \eqref{eq:smallness} and that it is sufficiently smooth, so that problem \eqref{burgers} has a unique global classical solution. 
Moreover, we assume for simplicity that $u_0$ is a strictly monotone function. 
These conditions, as well as \eqref{eq:smallness} with respect to well-posedness, are not necessary; 
in the next section, we will show numerical simulations where the initial datum is either non-monotone or discontinuous, 
but the solution becomes monotone (cf. Figure \ref{fig3new}) and continuous (see right picture in Figure \ref{quellabella}) in finite time.

The following proposition, which will be useful soon after, shows that in our setting the monotonicity is preserved in time.
\begin{proposition}\label{crescente}
Let $u(x, t)$ be a classical solution of \eqref{burgers}, with $u_0 \in C^3(I)$ monotone increasing (decreasing) and satisfying \eqref{eq:smallness}. 
Then, for every $t > 0$, $u(\cdot, t)$ is monotone increasing (decreasing). 
 \end{proposition}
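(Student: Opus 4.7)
My plan is to set $w := \partial_x u$ and to show that the sign of $w$ is preserved in time via the maximum principle applied to the linear parabolic equation that $w$ satisfies. First I would differentiate the PDE in \eqref{burgers} with respect to $x$ (using the identity $\partial_x\bigl[\partial_x u/\sqrt{1+(\partial_x u)^2}\bigr] = \partial_x^2 u/(1+(\partial_x u)^2)^{3/2}$) to obtain
$$
\partial_t w - \frac{\varepsilon}{(1+w^2)^{3/2}}\,\partial_x^2 w + \left(\frac{3\varepsilon\, w\, \partial_x w}{(1+w^2)^{5/2}} + f'(u)\right)\partial_x w + f''(u)\,w^2 = 0 \quad \mbox{in } I \times (0,+\infty).
$$
Read as a linear equation in $w$ with the remaining quantities viewed as given coefficients, this is uniformly parabolic on every strip $I\times[0,T]$: thanks to the a priori bound \eqref{stimaux}, the coefficient of $\partial_x^2 w$ is bounded from below by $\varepsilon/(1+C_0^2)^{3/2}>0$, while the first- and zeroth-order coefficients are bounded because of the classical regularity of $u$.

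The next step is to pin down the sign of $w$ on the parabolic boundary of $I\times(0,T)$. At $t=0$ the hypothesis yields $w(\cdot,0)=u_0'\geq 0$ (resp., $\leq 0$). For the spatial boundary, I would first argue that the weak maximum principle applied directly to \eqref{burgers} gives $u(x,t)\in[u_-,u_+]$ (resp., $[u_+,u_-]$) for every $(x,t)$: the standard perturbation argument with $v:=u-\eta t$, $\eta>0$, works here, since at an interior maximum of $v$ with $t>0$ one has $\partial_x u=0$ and $\partial_x^2 u\leq 0$, which combined with the explicit form of the diffusion forces $\partial_t u\leq 0$, contradicting $\partial_t u\geq \eta$; letting $\eta\to 0^+$ gives the claim. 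It follows that $u(\ell,t)=u_+$ (resp., $u_-$) is the spatial maximum (resp., minimum) of $u(\cdot,t)$, so one-sided difference quotients force $w(\ell,t)\geq 0$ (resp., $\leq 0$); an analogous reasoning at $x=-\ell$ gives the same sign.

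Having $w\geq 0$ (resp., $\leq 0$) on the parabolic boundary, I would conclude through the weak maximum principle for the equation satisfied by $w$---after the standard change of unknown $\tilde w:=e^{-Kt}w$ with $K$ larger than the $L^\infty$-norm of the zeroth-order coefficient $f''(u)w$, which takes care of the sign of that term---obtaining $w\geq 0$ (resp., $\leq 0$) throughout $I\times(0,T)$ for every $T>0$, hence for all positive times. The main obstacle I anticipate is the rigorous justification of uniform parabolicity and of the boundedness of all coefficients in the equation for $w$: this rests crucially on the a priori estimate \eqref{stimaux} of Proposition \ref{prop:aprioriux}, without which the mean-curvature operator would degenerate as $|\partial_x u|\to +\infty$ and the maximum-principle argument could not be carried out in the present form.
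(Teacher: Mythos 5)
Your proposal is correct and follows essentially the same route as the paper: differentiate the equation to get the parabolic equation \eqref{eqw} for $w=\partial_x u$, use the a priori bounds of Proposition \ref{prop:aprioriux} to control the coefficients and to fix the sign of $w$ on the parabolic boundary (via $u_-\leq u\leq u_+$), and conclude by the maximum/comparison principle. The extra details you supply (uniform parabolicity from \eqref{stimaux}, the exponential change of unknown for the zeroth-order term) are exactly what is implicit in the paper's appeal to the comparison principle of \cite{Lie}.
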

\begin{proof}
We prove the statement for $u_0$ increasing.
Observe that from \eqref{aprioriu} it follows that
\begin{equation}\label{aprioriu-monotone}
	u_-\leq u(x,t)\leq u_+, \qquad \qquad \forall\, x\in [-\ell,\ell], \, t\geq0.
\end{equation}
By differentiating with respect to $x$ the differential equation in \eqref{problema}, we obtain that $w = \d_x u$ solves
\begin{equation}\label{eqw}
\d_t w =\e \frac{ \d_x^2 w}{(1+w^2)^{3/2}} - 3\e \frac{  w (\d_x w)^2}{(1+w^2)^{5/2}}-f'(u) \d_x w-f''(u)w^2.
\end{equation}
Equation \eqref{eqw} is parabolic and both $w=0$ and $w=\d_x u$ are solutions; 
moreover, $w(x, 0) \geq 0$ by assumption and $w(\pm\ell, t)=u_x(\pm \ell, t) \geq 0$ for every $t$, otherwise \eqref{aprioriu-monotone} would be violated. 
By the comparison principle \cite[Theorem 9.7]{Lie}, then, $w(x, t) \geq 0$ for every $x \in [-\ell, \ell]$ and $t > 0$,
namely $u(\cdot, t)$ is increasing for all $t>0$. In case $u_0$ is decreasing, a similar argument (notice that \eqref{aprioriu-monotone} here holds with reverse signs) yields the conclusion. 
\end{proof} 


\subsection{Stability of the increasing steady state} 

We now deal with the stability properties of the increasing steady state $u_I$ implicitly defined by the relation \eqref{ISS}; 
precisely, we prove that, in presence of sufficiently small boundary data, the $L^2$-distance between the classical solution of \eqref{burgers} and $u_I$ goes to zero as $t \to \infty$.

\begin{theorem}\label{propstabilita1}
Fix $\e > 0$ and 
denote by $u$ the classical solution to \eqref{burgers}, where the initial datum $u_0 \in C^3(I)$ is strictly monotone and satisfies \eqref{eq:smallness}. 
Assume moreover that \eqref{ipodatibordo} holds. Then, there exists a positive constant $\bar c $ (that depends on $\e$ and can be explicitly computed) 
such that if
\begin{equation}\label{ipodatibordo2}
\sup_{u \in [u_-, u_+]} |f'(u)| \leq \bar c \, \e,
\end{equation}
then there exists $K_\e >0$ such that
\begin{equation*}
\Vert u(\cdot, t)- u_I\Vert_{{}_{L^2}} \leq e^{-K_\e t} \Vert u_0- u_I\Vert_{{}_{L^2}}.
\end{equation*}
\end{theorem}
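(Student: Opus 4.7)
The plan is to work with the perturbation $v(x,t) := u(x,t) - u_I(x)$, which vanishes at $x = \pm\ell$ (both $u$ and $u_I$ share the same boundary values) and solves
\begin{equation*}
\partial_t v = \varepsilon\,\partial_x\!\bigl(\varphi(\partial_x u) - \varphi(\partial_x u_I)\bigr) - \partial_x\!\bigl(f(u) - f(u_I)\bigr), \qquad \varphi(s) := \tfrac{s}{\sqrt{1+s^2}},
\end{equation*}
with $\varphi'(s) = (1+s^2)^{-3/2}>0$. I would pair this equation with $v$ in $L^2(I)$ and integrate by parts; the boundary terms drop thanks to the homogeneous Dirichlet condition on $v$, yielding
\begin{equation*}
\tfrac{1}{2}\tfrac{d}{dt}\|v\|_{L^2}^2 = -\varepsilon\!\int_I\!\bigl(\varphi(\partial_x u)-\varphi(\partial_x u_I)\bigr)\partial_x v\,dx + \int_I\!(f(u)-f(u_I))\,\partial_x v\,dx.
\end{equation*}

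The first step is to bound the diffusive term from below using the monotonicity of $\varphi$. Proposition \ref{prop:aprioriux} gives a uniform estimate $\|\partial_x u(\cdot,t)\|_{L^\infty}\leq C_0$, and the explicit representation \eqref{ISS} together with the constraint $-m<C<\varepsilon-M$ provides an analogous bound on $\partial_x u_I$; thus $\varphi'(\eta) \geq (1+C_0^2)^{-3/2}$ along any segment joining $\partial_x u(x,t)$ and $\partial_x u_I(x)$, and the mean value theorem delivers
\begin{equation*}
\int_I\!\bigl(\varphi(\partial_x u)-\varphi(\partial_x u_I)\bigr)\partial_x v\,dx \geq \frac{1}{(1+C_0^2)^{3/2}}\|\partial_x v\|_{L^2}^2.
\end{equation*}
For the convective term, combining \eqref{aprioriu} with Proposition \ref{crescente} shows that $u(x,t),\,u_I(x)\in[u_-,u_+]$, so by \eqref{ipodatibordo2} one has $|f(u)-f(u_I)|\leq \bar c\,\varepsilon\,|v|$, and Cauchy--Schwarz gives
\begin{equation*}
\Bigl|\int_I(f(u)-f(u_I))\partial_x v\,dx\Bigr|\leq \bar c\,\varepsilon\,\|v\|_{L^2}\|\partial_x v\|_{L^2}.
\end{equation*}

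Plugging these into the energy identity and applying the Poincar\'e inequality $\|v\|_{L^2}\leq (2\ell/\pi)\|\partial_x v\|_{L^2}$ on $H^1_0(I)$ (either directly or after a Young step), the coefficient of $\|\partial_x v\|_{L^2}^2$ can be forced to be strictly negative once $\bar c$ lies below a threshold of the form $\pi/(2\ell(1+C_0^2)^{3/2})$; using Poincar\'e once more to convert the surviving gradient norm into $\|v\|_{L^2}^2$, one arrives at an inequality of the form $\tfrac{d}{dt}\|v\|_{L^2}^2\leq -2K_\varepsilon \|v\|_{L^2}^2$ with $K_\varepsilon>0$, and Gronwall's lemma closes the argument.

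I expect the main obstacle to be the accounting of constants: one has to track the interplay between the saturation-induced factor $(1+C_0^2)^{-3/2}$ in the diffusion lower bound and the two distinct uses of the Poincar\'e constant (one to absorb the convective term, one to produce the exponential rate), and to verify that the resulting threshold on $\bar c$ is genuinely below $(\pi/2\ell)^2$, as stated. The secondary, but necessary, technical point is ensuring a common uniform upper bound $C_0$ for $|\partial_x u|$ and $|\partial_x u_I|$; this follows from \eqref{stimaux} and from the fact that the integration constant in \eqref{staz} satisfies $|f(u_I)+C|<\varepsilon$, which keeps $\partial_x u_I = (f(u_I)+C)/\sqrt{\varepsilon^2-(f(u_I)+C)^2}$ bounded.
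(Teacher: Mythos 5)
Your proof is correct and follows the same energy-method route as the paper: test the equation for $v=u-u_I$ against $v$ itself, bound the saturating-diffusion term from below by a multiple of $\|\partial_x v\|_{L^2}^2$ using the uniform gradient bounds, absorb the convective term via $\sup|f'|$ and Poincar\'e, and conclude with Gronwall. The one genuine difference is how the coercivity of the diffusive difference is obtained: the paper factors $\varphi(a)-\varphi(b)$ (with $a=\partial_x u$, $b=\partial_x u_I$) explicitly as $\frac{(a-b)(a+b)}{\sqrt{1+a^2}\sqrt{1+b^2}\left(a\sqrt{1+b^2}+b\sqrt{1+a^2}\right)}$, whose denominator is under control only when $a$ and $b$ have the same sign --- which is exactly why the paper invokes Proposition \ref{crescente} (preservation of monotonicity) at the outset --- whereas your mean value theorem bound $\varphi'(\eta)\ge(1+C_0^2)^{-3/2}$ uses only the $L^\infty$ bounds on the two gradients and is sign-independent, so it is marginally more elementary and makes the monotonicity of $u$ superfluous for this particular step. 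The resulting lower bounds coincide (your $(1+C_0^2)^{-3/2}$ is the paper's $1/B_\e$ when $C_0$ also dominates $\|\partial_x u_I\|_{L^\infty}$), and the only remaining discrepancy is bookkeeping: the paper carries the factor $(2\ell/\pi)^2$ where the sharp Poincar\'e constant $2\ell/\pi$ that you use would do, so your threshold for $\bar c$ differs from the paper's by a harmless power of $2\ell/\pi$; in either case $\bar c$ can be shrunk so as to satisfy $\bar c<(\pi/2\ell)^2$ as the statement requires.
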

The proof of Theorem \ref{propstabilita1} essentially relies on Proposition \ref{crescente} and on a priori estimates using the definition of weak solution.
\begin{proof}[Proof of Theorem  \ref{propstabilita1}] 
We first notice that, thanks to Proposition \ref{crescente}, the solution $u$ of \eqref{burgers} is increasing in $x$, since $u_0$ is increasing.
We set $w=u(x, t) - u_I(x)$, where $u_I$ is the strictly increasing stationary solution implicitly defined in \eqref{ISS}, which satisfies 
\begin{equation}\label{derss}
\d_x u_I = \frac{f(u)+C_I}{\sqrt{\e^2-(f(u)+C_I)^2}}, 
\end{equation}
for a suitable $C_I$ such that $0 < C_I < \e - f(u_\pm)$. 
By integrating the equation against the test function $\varphi=w$ we obtain
\begin{equation}\label{1stima}
\begin{aligned}
\frac{1}{2}\frac{d}{dt}&\Vert w \Vert^2_{{}_{L^2}}(t)-\int_{-\ell}^\ell\left(f(u)-f( u_I) \right) \d_x w \, dx  \\
 &\small +{\e}\int_{-\ell}^\ell  \frac{(\d_x u+ \d_x  u_I)(\d_x w)^2}{\sqrt{1+(\d_x u_I)^2}\sqrt{1+(\d_xu)^2}\left(\d_x u \sqrt{1+(\d_x u_I)^2}+ \d_x u_I \sqrt{1+(\d_xu)^2}\right)} \, \, dx=0.
\end{aligned}
\end{equation}
As concerning the second term in \eqref{1stima}, by using the H\"older inequality we have 
\begin{equation*}
\begin{aligned}
\left\vert \int_{-\ell}^{\ell} \Big(f(u)-f(u_I)\Big) \partial_x w \, dx \right \vert &\leq \left(  \sup_{u \in [u_-, u_+]} |f'(u)| \right) \int_{-\ell}^\ell \vert w\partial_x w \vert \, dx  \\
&\leq c_p\left(   \sup_{u \in [u_-, u_+]} |f'(u)| \right) \Vert \partial_x w \Vert^2_{{}_{L^2}},
\end{aligned}
\end{equation*}
where $c_p=(2\ell/\pi)^2$ is the constant appearing in the Poincar\'e inequality $ \Vert w \Vert^2_{{}_{L^2}} \leq c^2_p \Vert \d_x w \Vert^2_{{}_{L^2}}$.
As for the other integral term, set
\begin{equation*}
A_\e := \max \Bigg\{ \sqrt{1+C_0^2} \, , \sqrt{1+\Vert \d_x u_I\Vert^2_{{}_{L^\infty}}} \, \Bigg\}, 
\end{equation*}
where $C_0=h^{-1}(\alpha/\e)$ (see \eqref{stimaux}). 
In view of Proposition \ref{crescente}, we have
\begin{equation*}
{\sqrt{1+(\d_x u_I)^2}\sqrt{1+(\d_xu)^2}\left(\d_x u \sqrt{1+(\d_x u_I)^2}+ \d_x u_I \sqrt{1+(\d_xu)^2}\right)} \leq A^2_\e \left( A_\e [\d_x u + \d_x u_I]\right),
\end{equation*}
so that for the third term in \eqref{1stima} we infer
$$
{\e}\int_{-\ell}^\ell  \frac{(\d_x u+ \d_x  u_I)(\d_x w)^2}{\sqrt{1+(\d_x u_I)^2}\sqrt{1+(\d_xu)^2}\left(\d_x u \sqrt{1+(\d_x u_I)^2}+ \d_x u_I \sqrt{1+(\d_xu)^2}\right)}\geq \frac{\e}{B_\e} \Vert \partial_x w \Vert^2_{{}_{L^2}},
$$
being $B_\e := A_\e^3 $.
Hence, \eqref{1stima} becomes
$$
\frac{1}{2}\frac{d}{dt} \Vert w\Vert^2_{{}_{L^2}}(t) + \Big(\frac{\e }{B_\e} -\Big(\frac{2\ell}{\pi}\Big)^2 \sup_{u \in [u_-, u_+]} |f'(u)| \Big)  \Vert \d_x w\Vert_{{}_{L^2}}^2(t) \leq 0.
$$
Choosing
$$
\bar{c} < \Big(\frac{\pi}{2\ell}\Big)^2 \frac{1}{B_\e}, 
$$
in view of assumption \eqref{ipodatibordo2} we have
$$
\frac{\e }{B_\e}-\Big(\frac{2\ell}{\pi}\Big)^2\sup_{u \in [u_-, u_+]} |f'(u)|  > 0
$$
so that we can use again the Poincar\'e inequality ending up with  
$$
\frac{1}{2}\frac{d}{dt} \Vert w\Vert^2_{{}_{L^2}}(t) + K_\e \Vert  w\Vert_{{}_{L^2}}^2(t) \leq 0, 
$$
where
$$K_\e:={c_p^{-2}}\left(\frac{\e }{B_\e}-\Big(\frac{2\ell}{\pi}\Big)^2\sup_{u \in [u_-, u_+]} |f'(u)|\right).$$ 
The statement follows from the standard comparison principle for ODEs. 
\end{proof}

A drawback of the above proof is that $\bar{c}$ does not have a direct and simple estimate. 
Indeed, one should give an explicit estimate of $\partial_x u_I$ (using for instance \eqref{derss}), and to this end the constant $C_I$ for which $\Phi(C_I)=2\ell$ should be controlled. 
This appears in fact quite involved and can in general be done only numerically. 
However, we can give a rough estimate of $\bar{c}$ in some cases: for instance, if $f$ is positive out of $0$, from the equality $\Phi(C_I)=2\ell$ we deduce that 
$$
C_I \leq \frac{\e (u_+ - u_-)}{2\ell}. 
$$
Hence, if for instance 
$$
f(u) < \e \Big(\frac{1}{2} - \frac{u_+ + u_-}{2\ell}\Big),
$$
we can deduce from \eqref{derss} that 
$$
\partial_x u_I < \frac{1}{\sqrt{3}}.
$$
Setting 
$M_0=1+C_0^2$ we thus infer 
that $A_\e = \max\{\sqrt{M_0} \, , 2\sqrt{3}/3\}$, so that it is sufficient to choose 
$$
\bar{c} < \Big(\frac{\pi}{2\ell}\Big)^2 \frac{1}{\max\{M_0^{3/2}, 8\sqrt{3}/9\}},
$$
and the constant on the right hand side can be chosen independently on $\e$ (see Remark \ref{rem:C0}).
This choice would also be reflected in a lower bound for $K_\e$.
The more $\ell$ approaches $c_I^+/2$, the more $\partial_x u_I$ will be large, since there will be less room to connect $u_-$ and $u_+$. 
\\
Finally, notice that the argument may be repeated similarly when considering decreasing solutions. 

\smallbreak
\noindent
{\bf Comments on the assumptions on the flux function $f$.} We conclude this section with some comments on the assumptions \eqref{ipodatibordo} and \eqref{ipodatibordo2}. 

\smallbreak
\noindent
As already remarked, such assumptions have to be read as {\it smallness} hypotheses on the boundary data; 
indeed, they are clearly satisfied for any $f \in C^2(\R)$ such that $f(0)=f'(0)=0$,  if $u_\pm$ are sufficiently small.
For example, if $f$ is a   power law of the form $f(u)= \k u^{p}$, $p > 1$, then \eqref{ipodatibordo} and \eqref{ipodatibordo2} are respectively satisfied  if
\begin{equation*}
\begin{aligned}
 \max\left\{|u_+|, |u_-|\right\} < \left( \frac{\e}{2\k} \right)^{1/p}\quad {\rm and} \quad  \max\left\{|u_+|, |u_-|\right\} < \left(\frac{ \bar c \, \e}{ p \,  \k }\right)^{1/(p-1)}.
\end{aligned}
\end{equation*}
We notice that, since $\e$ is small, condition \eqref{ipodatibordo2} is stronger than \eqref{ipodatibordo}; 
for example, in the case of a Burgers flux $f(u)=u^2/2$, we need to ask $|u_\pm|  <  \bar c \, \e$, which also implies $|u_\pm| <  \sqrt{\e}$ (again by the smallness of $\e$). 
Hence, when $  \bar c \, \e<|u_\pm|< \sqrt{\e}$, we know that a unique steady state exists, but Theorem \ref{propstabilita1} does not prove its stability.
We will focus the attention on this issue in the following section, where we will show that in some cases the steady state is indeed \emph{metastable}.
\smallbreak
\noindent We also observe that in the case of a linear flux $f(u)= \k u$, the second integral in \eqref{1stima} is zero; hence, we no longer need to require assumption \eqref{ipodatibordo2}.
\smallbreak
\noindent
Finally, we observe that all the results of this section hold also for $\e$ large; in this case condition \eqref{ipodatibordo} is milder and may imply \eqref{ipodatibordo2}.

\section{The metastable dynamics: numerical evidences} \label{meta:num}
In this section, we illustrate some numerical simulations for the time-dependent solution  to the following initial-boundary value problem
\begin{equation}\label{burgers2}
 	\left\{\begin{aligned}
		\partial_t u	& =\varepsilon\,\partial_x\left(\frac{\partial_x u}{\sqrt{1+(\d_x u)^2}}\right) - \partial_x f(u),
		&\qquad &x\in I, t > 0,\\
 		u(\pm\ell,t)&=u_\pm, &\qquad &t >0,\\
		u(x,0)		& =u_0(x), &\qquad &x\in I,
 	 \end{aligned}\right.
\end{equation}
aiming at showing that a metastable behavior appears if appropriately choosing the data. 

The flux function $f$ is here and throughout the rest of the paper assumed to satisfy the following additional hypotheses:
\begin{equation}\label{ipof}
	f''(u)\geq c_0>0 \quad \mbox{for every} \ \ u,
	\qquad f(u_+)=f(u_-),
\end{equation}
being the principal example we have in mind the case of a Burgers flux $f(u)=u^2/2$. 
Notice that this convexity assumption will be needed in order to observe a metastable behavior, while it is not necessary for the existence. 
We further notice that \eqref{ipof} and \eqref{ipodatibordo} give $f(u_\pm) < \e$. 
Here and throughout this section, we will actually choose $f(u_\pm)=\e/2$; 
indeed, according to Theorem \ref{propstabilita1}, if the boundary data are taken too small, 
then the steady state is stable but not metastable, as already remarked (see also Remark \ref{remarkstm}).

To start with, numerical simulations suggest that the occurrence of a metastable behavior strongly depends on the initial conditions (the same phenomenon has been observed in \cite{Str15}). 
We see that, when starting from an initial datum connecting a value $u_- < u_+$  (meaning that the time-dependent solution will converge, for large time, towards the {\it increasing} steady state), 
no metastability is observed (see the left picture in Figure \ref{fig3}): 
the stable equilibrium configuration, corresponding to a solution with a horizontal interface located at zero, is in fact attained in a short  time interval.
On the contrary (see the right picture in  Figure \ref{fig3}), when starting from an initial datum connecting boundary values $u_->u_+$,
the corresponding time-dependent solution still develops an internal shock layer on a short time scale, 
but the convergence towards the {\it decreasing} steady state (corresponding to the solution with a vertical interface located at zero) requires much more time: 
for times of the same order as in the previous simulation, the shock layer is still located far from zero (see also the right picture of Figure \ref{fig4}).

\begin{figure}[h]
\centering
\includegraphics[width=6cm,height=5.5cm]{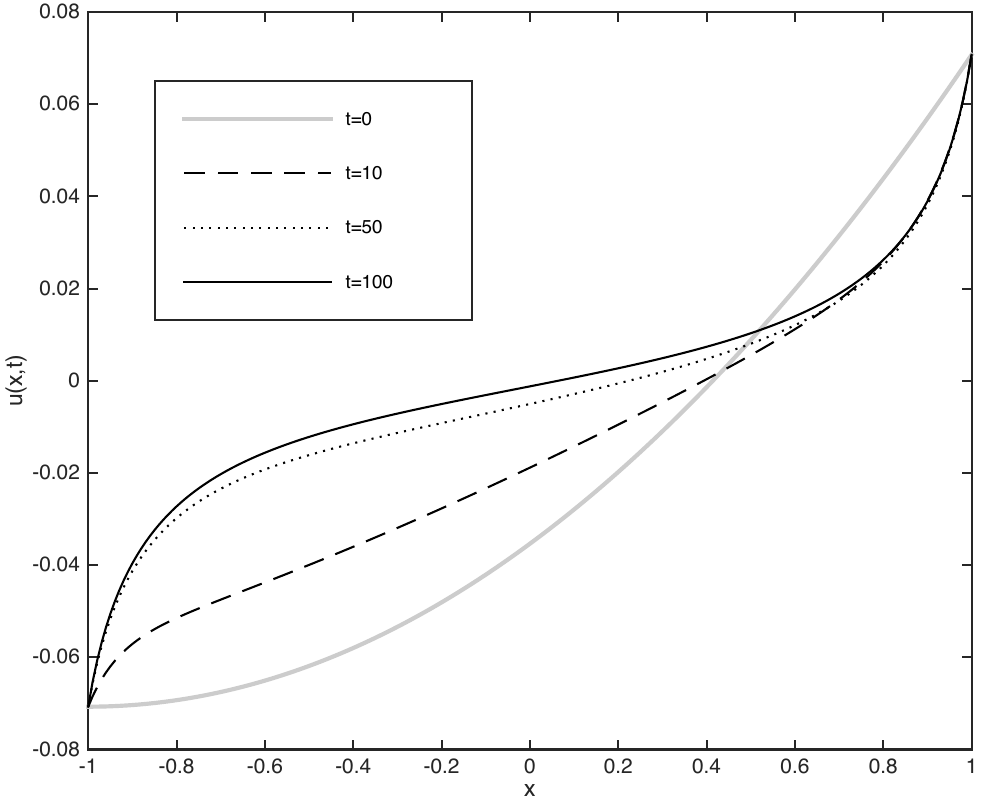}
 \qquad\quad
\includegraphics[width=6.2cm,height=5.5cm]{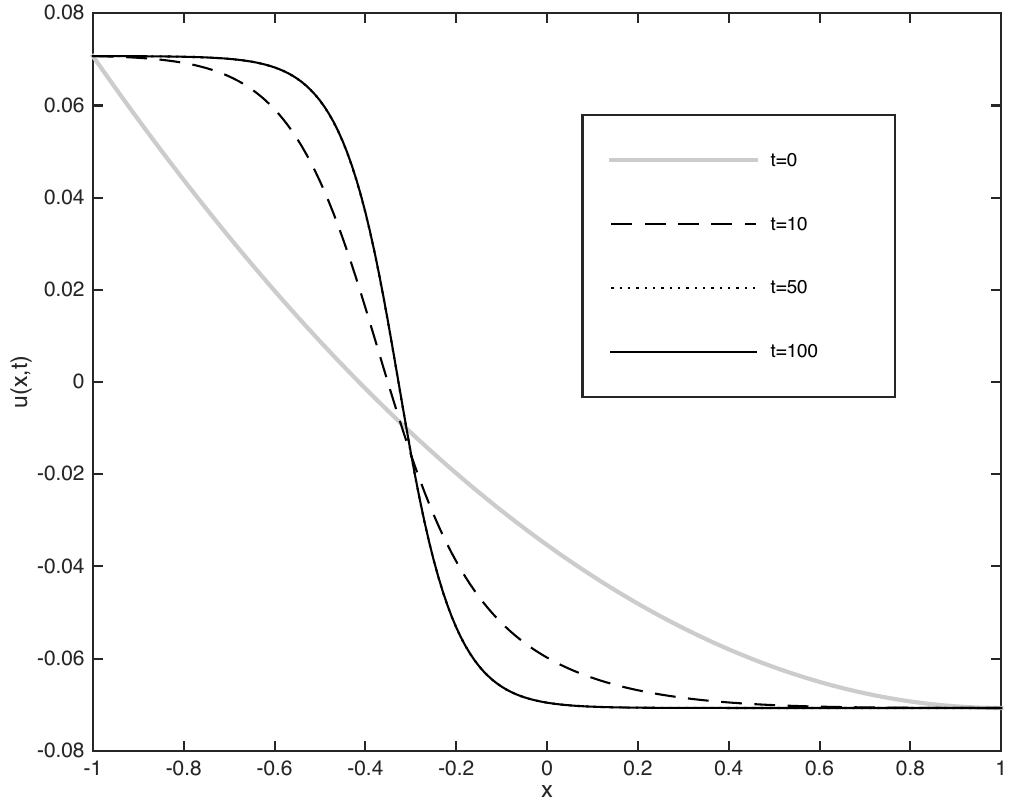}
 \hspace{3mm}
 \caption{\small{The dynamics of the solution to \eqref{burgers2} for $\e=0.005$,  $f(u)=u^2/2$ and $u_0$ increasing with $u_\pm=\pm\sqrt{\e}$ (left) and decreasing with $u_\pm=\mp\sqrt{\e}$ (right).  In both pictures, an interface is formed in a short time. However, in the left picture no metastable behavior is observed as one can see that, for $t=100$, the solution has already reached the steady state. On the opposite, in the right-hand picture the interface  is still very far from zero for times of the same order (the plots for $t=50$ and $t=100$ are  indistinguishable).  }}\label{fig3}
 \end{figure}

Based on these numerical simulations, we thus claim that a necessary condition for the appearance of a metastable behavior under \eqref{ipof} is that
\begin{equation}\label{SC}
u_0(-\ell) > 0 > u_0(\ell).
\end{equation}
Incidentally, we observe that, if the flux function $f$ is {\it concave}, then the necessary condition will be that $u_0(-\ell) < 0 < u_0(\ell)$.
We also observe that condition \eqref{SC} does not require the initial datum to be decreasing; 
however, we see from the numerical simulations that, once \eqref{SC} is satisfied, the solution starting from $u_0$ develops into a {\it decreasing} function in short times, 
and then converges towards the {\it decreasing} steady state (see Figure \ref{fig3new}).
\begin{figure}[h]
\centering
\includegraphics[width=6cm,height=5cm]{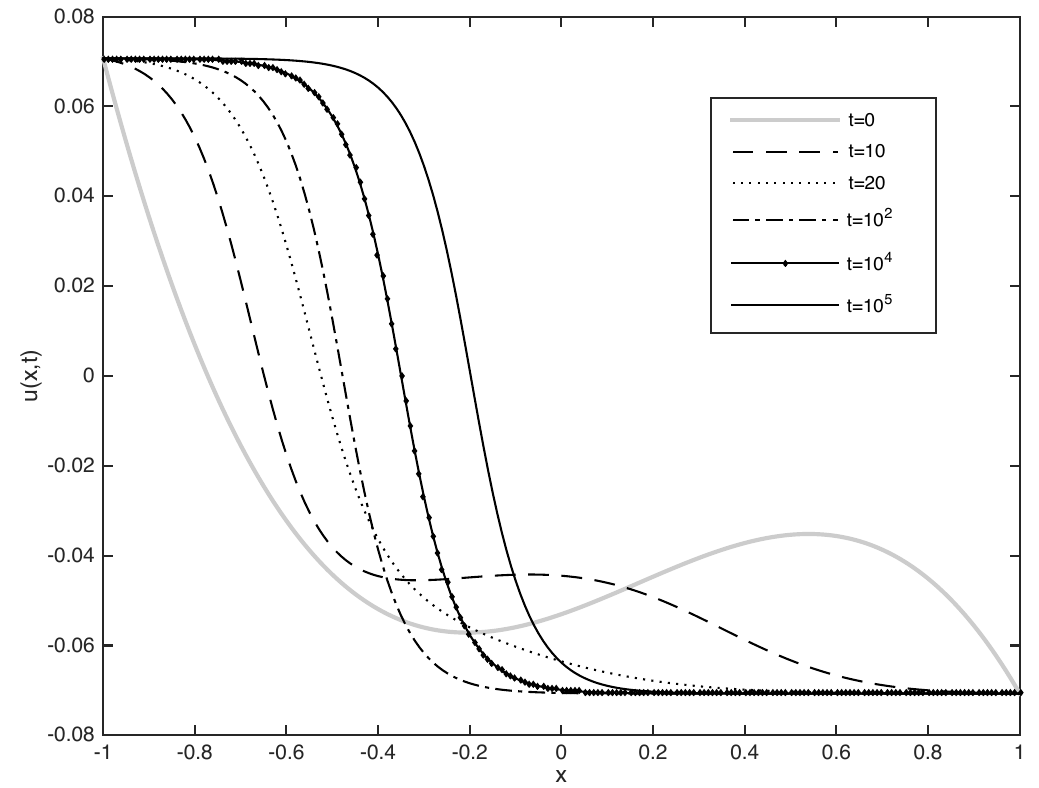}
\qquad \quad
\includegraphics[width=6cm,height=5cm]{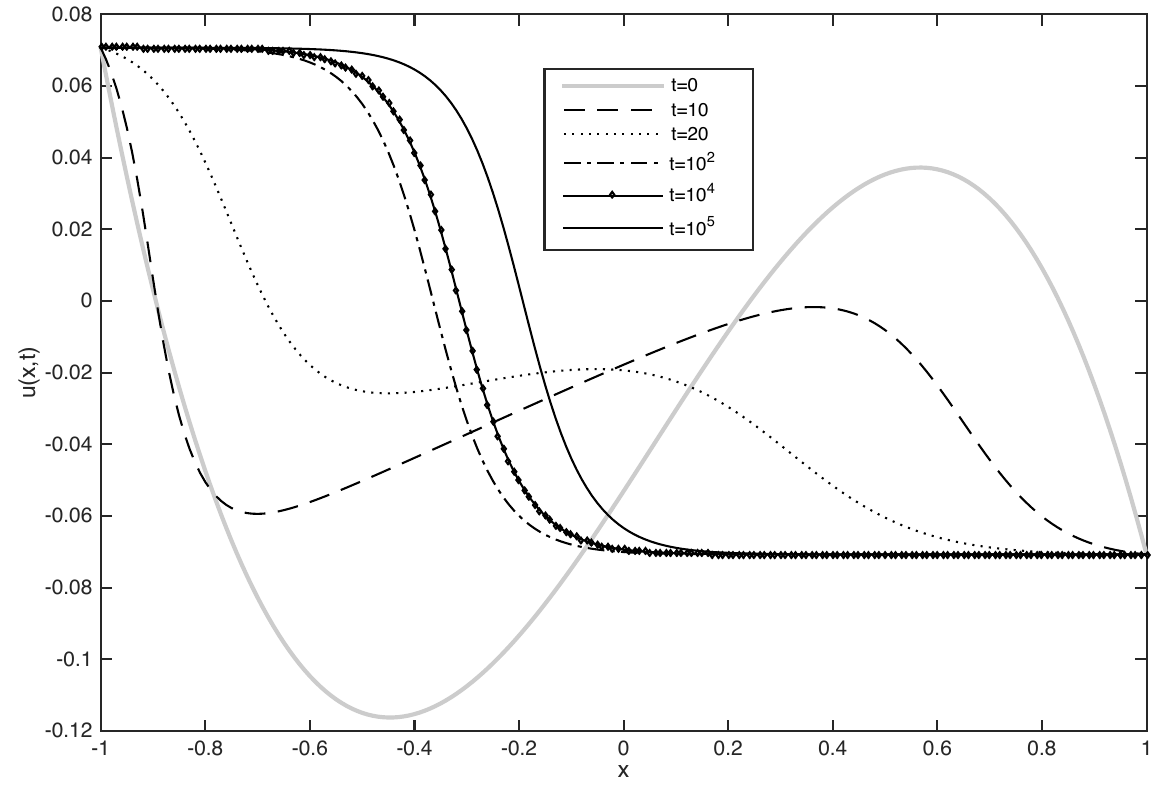}
 \hspace{3mm}
 \caption{\small{The dynamics of the solution to \eqref{burgers2} for $\e=0.005$, $f(u)=u^2/2$ and two different non-monotone initial data (plotted in grey) connecting the values $u_\pm=\mp\sqrt{\e}$. 
As we can see, it is neither necessary for the initial datum to be decreasing nor to be such that $\|u_0\|_{{}_{L^\infty}} \leq |u_\pm|$  to observe a metastable behavior.  }}\label{fig3new}
 \end{figure}

In Figure \ref{fig4a} we plot what happens when the zero of the initial datum is positive; 
we still observe a metastable behavior, but of course the interface will have to move towards the left to reach its asymptotic configuration.
\begin{figure}[ht]
\centering
\includegraphics[width=7cm,height=5.5cm]{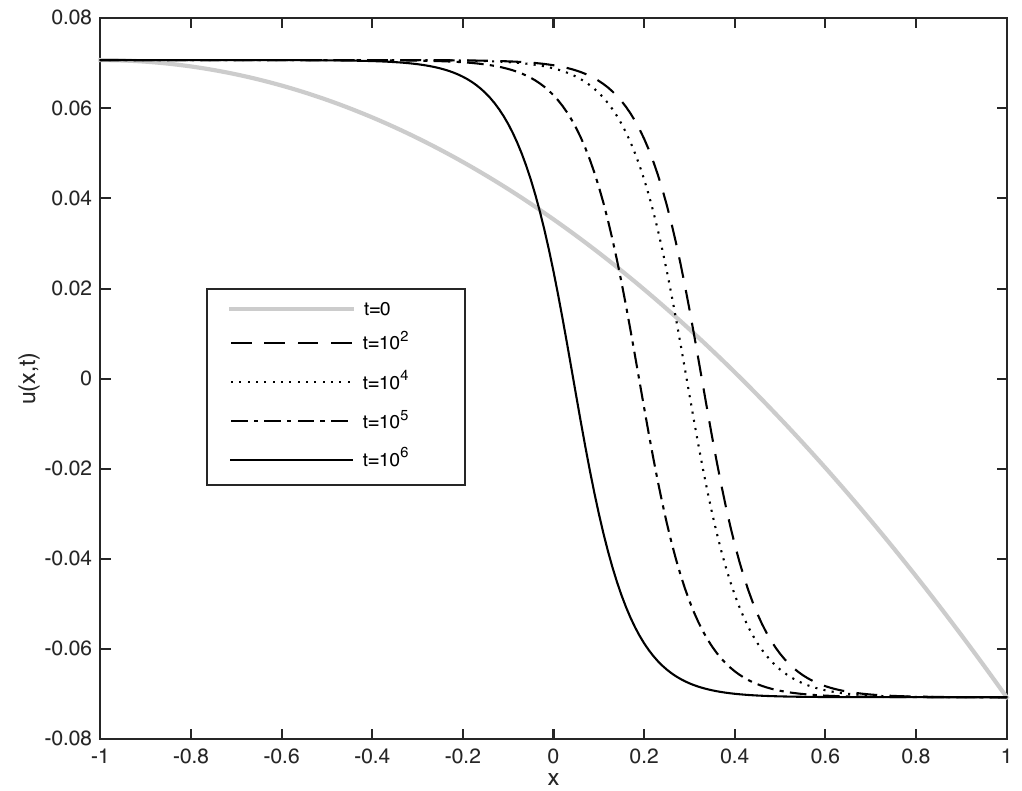}
 \caption{\small{The dynamics of the solution to \eqref{burgers2} for $\e=0.005$, $f(u)=u^2/2$, $u_\pm=\mp\sqrt\e$ and $u_0$ decreasing and such that $u_0(x_0)=0$ for some $x_0>0$. In this case,  the interface is moving with  negative speed.}}\label{fig4a}
 \end{figure}

In Figure \ref{fig6} we show that assumption \eqref{ipof}$_{ii}$ is necessary for the appearance of a metastable behavior: 
when $f(u_-) \neq f(u_+)$ the solution still exists but it does not display a slow convergence towards the equilibrium.
\begin{figure}[ht]
\centering
\includegraphics[width=6.5cm,height=5.5cm]{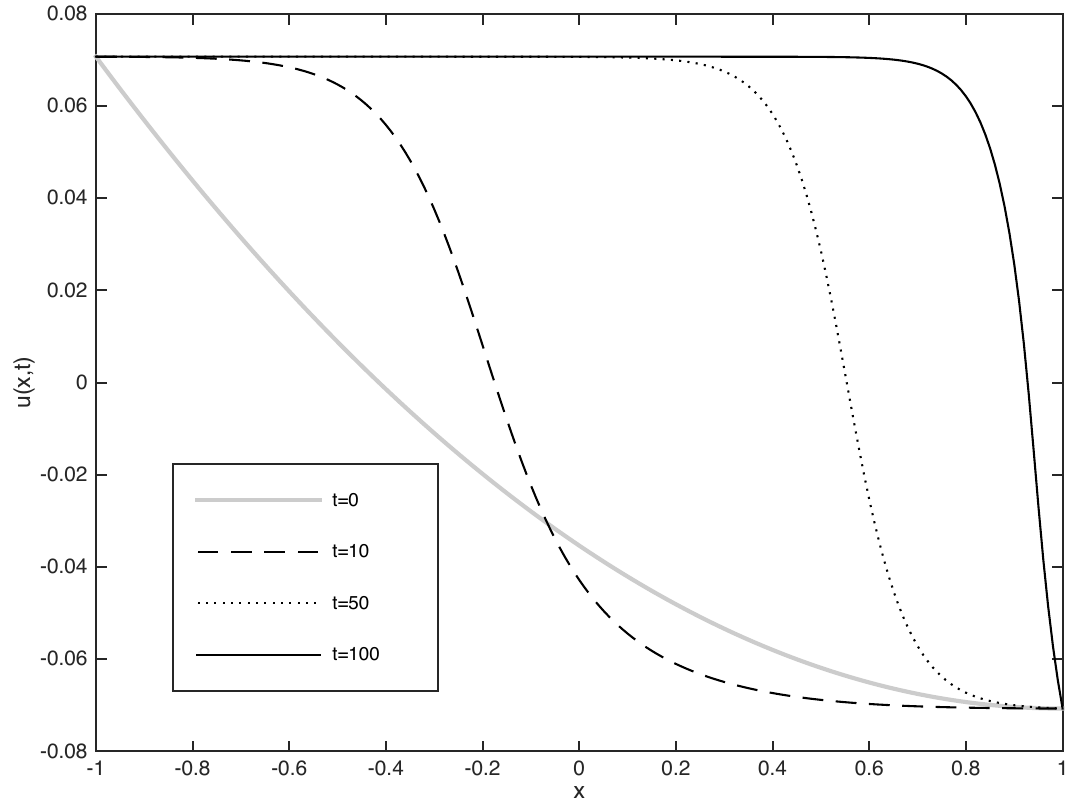}
\qquad \quad
\includegraphics[width=6.5cm,height=5.5cm]{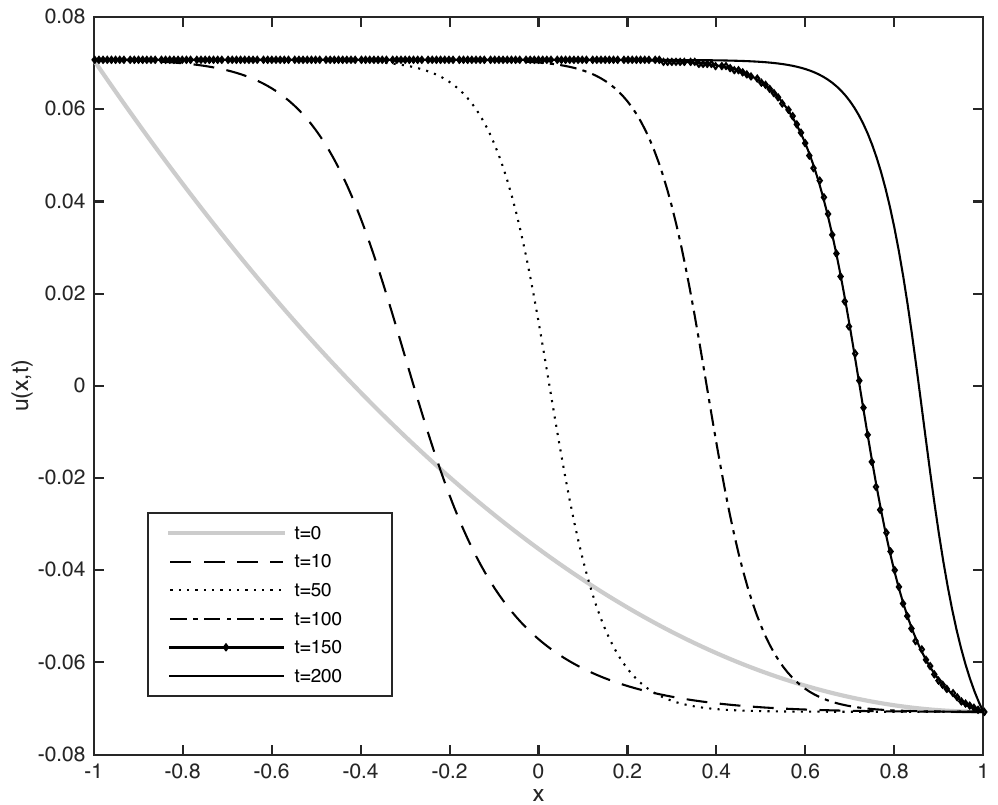}
 \hspace{3mm}
 \caption{\small{The dynamics of the solution to \eqref{burgers2} with $u_\pm = \mp \sqrt{\e}$, initial datum $u_0(x)=\sqrt{\e}\left(\frac{1}{2}x^2-x-\frac{1}{2}\right)$ and $\e=0.005$;  the flux function is $f(u)=\frac{1}{2}(u+a \, \sqrt{\e})^2$ with $a=0.25$ (left) and $a=0.1$ (right), so that $f(u_+)\neq f(u_-)$. We can see that the asymptotic steady state is attained in a short time scale. We also observe that $f(u_-)- f(u_+)= 2 a \e$  and the smaller this difference, the slower the convergence.}}\label{fig6}
 \end{figure}

To have an idea of how the size of the parameter $\e$ influences the speed rate of convergence of the solution towards the steady state, 
Figure \ref{fig4} shows the solution to \eqref{burgers2} for two different values of  $\e$: 
we can clearly see that, for times of the same order $t=5\cdot10^4$, on the one side the solution corresponding to a bigger value of $\e$ has already reached its asymptotic configuration
(corresponding to a solution with an interface located at $x=0$), while on the other side the solution corresponding to a smaller value of $\e$ has an interface still located far from zero. 

\begin{figure}[ht]
\centering
\includegraphics[width=6.5cm,height=5.5cm]{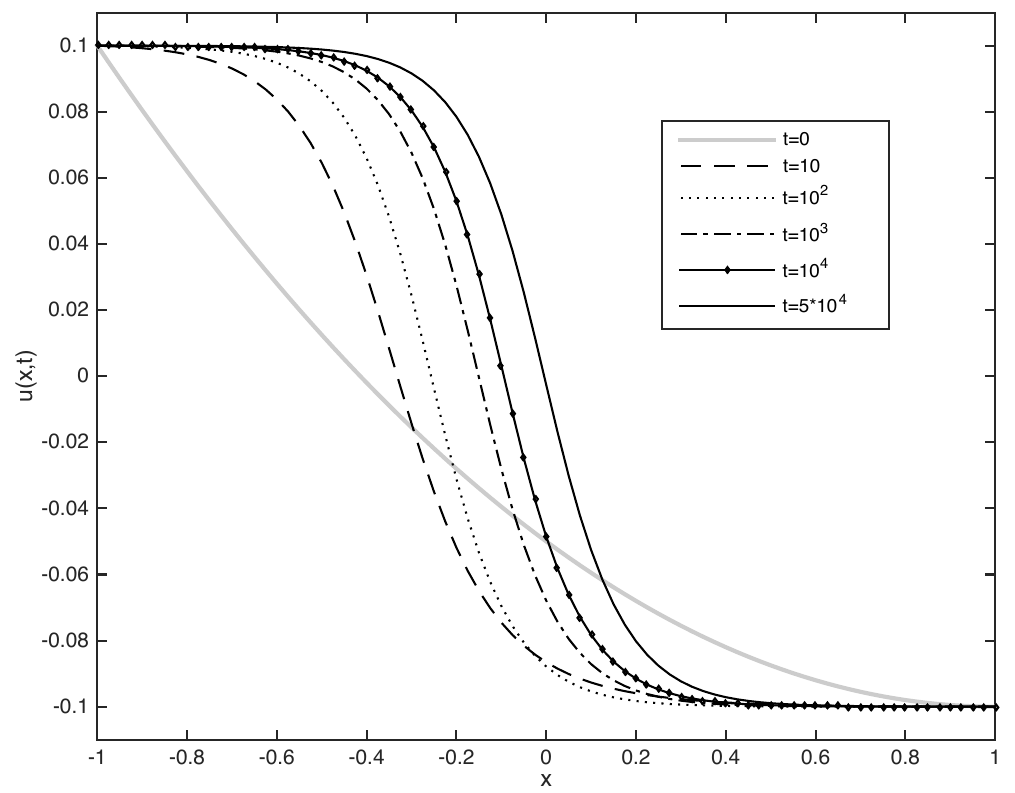}
\qquad\quad
\includegraphics[width=6.5cm,height=5.5cm]{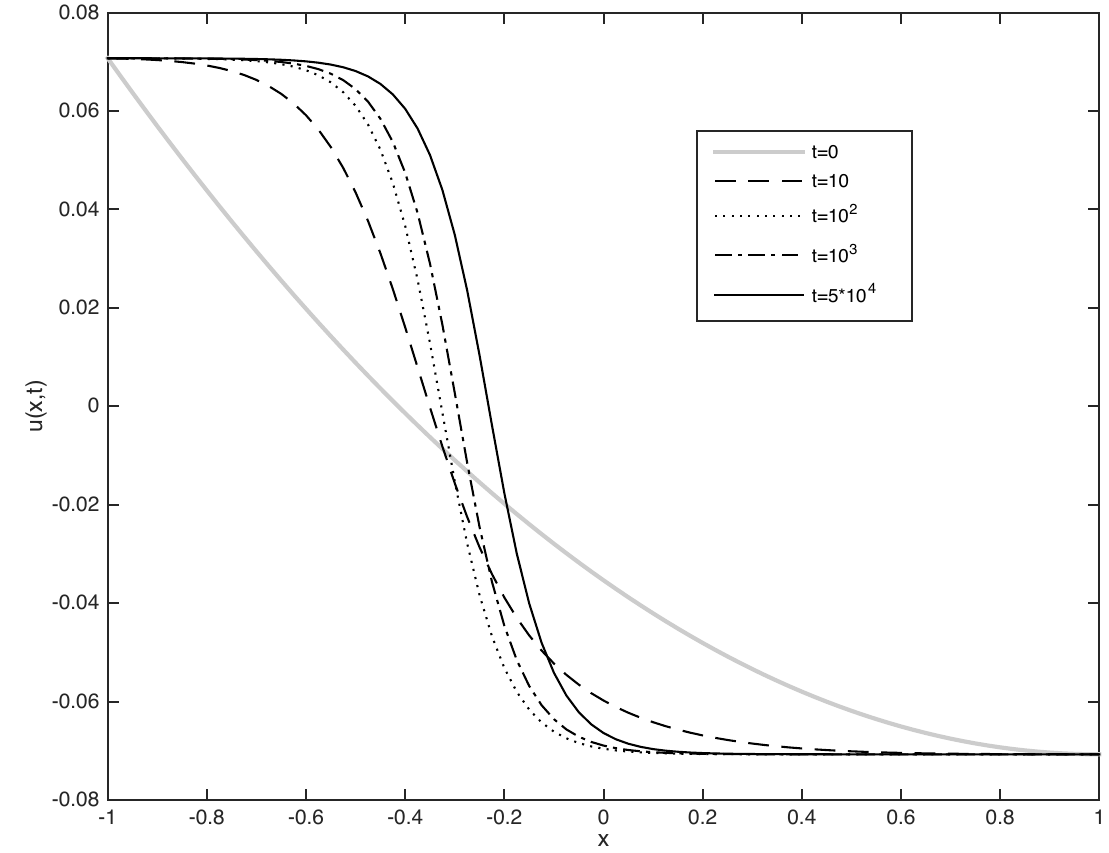}
 \hspace{3mm}
 \caption{\small{The dynamics of the solution to \eqref{burgers2} with $u_\pm = \mp \sqrt{\e}$  and initial datum $u_0(x)=\sqrt{\e}\left(\frac{1}{2}x^2-x-\frac{1}{2}\right)$, for $\e=0.01$ and $\e=0.005$ respectively; 
 in both cases, after an interface located at some point $\xi$ of the interval is formed, it starts to move towards its equilibrium configuration $\xi=0$, 
 but the time needed for the convergence becomes bigger as $\e$ becomes smaller.}}\label{fig4}
 \end{figure}
 
Finally, in Figure \ref{quellabella}, we fix $\e=0.001$ and we see, on the one hand, that the smooth solution reaches its equilibrium configuration only for times of the order $10^{12}$ (left picture); 
on the other hand, in the right picture we show that also starting with a small discontinuous initial datum the (smooth) solution displays the same metastable behavior of the previous cases.
 
 \begin{figure}[ht]
\centering
\includegraphics[width=6.8cm,height=5.5cm]{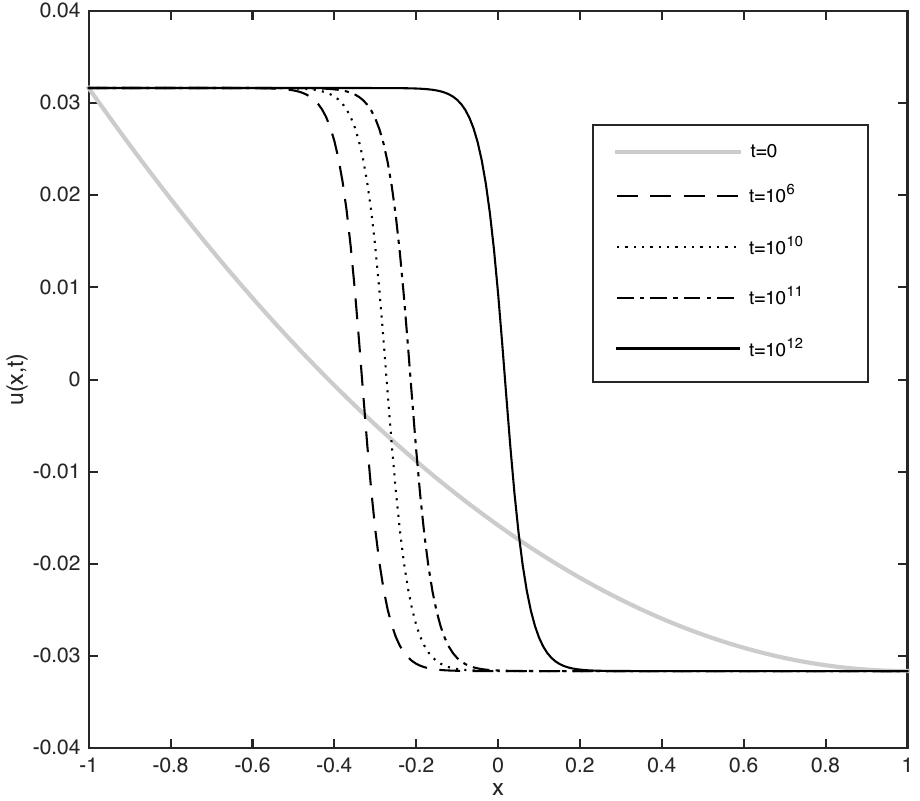}
\qquad\quad
\includegraphics[width=6.8cm,height=5.5cm]{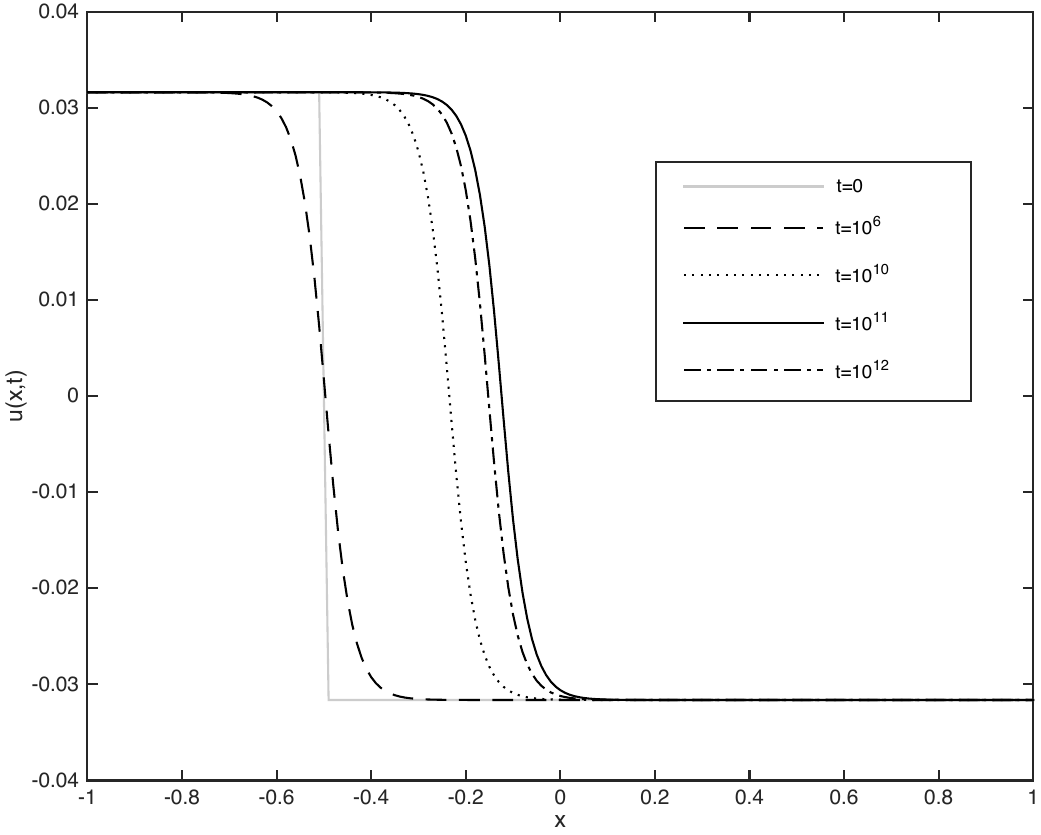}
 \hspace{3mm}
 \caption{\small{Here $\e=0.001$ and the data are as in Figure \ref{fig4} for the left picture, while for the right one we take $u_0(x)=\sqrt{\e}\left(\chi_{(-1,-0.5)}-\chi_{(-0.5,1)}\right)$ .}}\label{quellabella}
 \end{figure}

\section{The metastable dynamics: a rigorous approach}\label{meta}
In this section, we analyze the occurrence of a metastable dynamics for the solutions to \eqref{burgers2} under assumption \eqref{ipof} with $u_- > u_+$ (compare with \eqref{SC}), 
according to what we observed numerically in the previous section. 
To prove the appearance of this pattern, we mean to apply the strategy first developed in \cite{MS} and subsequently extended to general systems in \cite{Str17}; 
it can be divided into three main steps (for more details, see \cite[Section 2]{MS}) that we recall here for the reader's convenience.

\smallbreak
\begin{itemize}
\item {\bf Step I. The family of approximate steady states.} The first step is the construction of a one-parameter family {\it  of approximated steady states} $\{ U^\e(x;\xi) \}_{\xi}$. 
Precisely, given the parameter $\xi \in I$, the generic element $U^\e(x;\xi)$ is built in such a way that the following assumption is satisfied:

\smallbreak
\begin{description}
\item[{\bf H1}] There exists a family of smooth positive functions $\xi \mapsto \Omega^\e(\xi)$ such that $\Omega^\e \to 0$ for $\e \to 0$, uniformly in any compact subset of $I$, and 
\begin{equation*}
	|\langle \psi(\cdot ), \mathcal P^\e[U^\e(\cdot;\xi)]\rangle| \leq \Omega^\e(\xi)|\psi|_{L^\infty}, \quad \forall \, \psi \in C^{\infty}(I), \, \forall \, \xi \in I,
\end{equation*}
where $\mathcal{P}^\e$ is the operator on the right-hand side of \eqref{burgers2}. Moreover, there exists $\bar{\xi} \in I$ such that $\Omega^\e(\bar \xi) \equiv 0$.
\end{description}

\smallbreak
\noindent Assumption {\bf H1} states that each element of the family satisfies the stationary equation up to an error that is small in $\e$ and is measured by $\Omega^\e$, 
while the fact that $\Omega^\e$ vanishes when evaluated in $\bar \xi$ incorporates the property that the specific element $U^\e(x;\bar \xi)$ corresponds to the exact steady state of the equation. 
From now on, we will refer to $\bar \xi$ as the final equilibrium location for the parameter $\xi$, as the convergence of $\xi$ towards $\bar \xi$ will describe the convergence of a solution towards its asymptotic configuration.
 
\smallbreak
 
\item {\bf Step II. Linearization.}
Once the family $\{U^\e(x;\xi)\}_\xi$ is built up, the second step is the linearization of the original system \eqref{NonLBurg} around one of its elements. 
Hence, one has to look for a solution $u$ of the form
\begin{equation}\label{decu}
u(x,t)= U^\e(x;\xi(t))+v(x,t),
\end{equation}
with $\xi=\xi(t)\in I$  and the perturbation ${ v}={v}(x,t)\in L^2(I)$ to be determined. 
The key idea here is the following: in order to describe the dynamics of the solutions up to the formation of the internal interface and throughout their evolution towards the asymptotic limit, 
one supposes the parameter $\xi$ to depend on time, so that its evolution describes the asymptotic convergence of the interface towards the equilibrium. 
Essentially, with the decomposition \eqref{decu}, we reduce the evolution of the solution to the PDE to a one-dimensional dynamics for the parameter $\xi$.  

\smallbreak

\item {\bf Step III. Spectral assumptions.} 
As for the final step of the strategy, the idea is to derive an equation for the perturbation $v$, to be coupled with an equation of motion for the parameter $\xi$. 
To obtain the desired equations, the following assumption describing the distribution of the eigenvalues of the linearized operator around $U^\e$, named here $\mathcal L^\e$, has to be satisfied.

\smallbreak
\begin{description}
\item[{\bf H2}] The linear operator $\mathcal L^\e$ has a discrete spectrum composed by real and semi-simple eigenvalues $\{ \lambda_k^\varepsilon(\xi) \}_{k \in \N}$ such that, 
for any $\xi\in I$, the first eigenvalue $\lambda_1^\varepsilon$ satisfies $$\lim_{\e \to 0}\lambda_1^\varepsilon(\xi) = 0,$$  
while the rest of the spectrum  is negative and bounded away from zero, i.e.
\begin{equation*}
         \lambda^\varepsilon_k(\xi) \leq -C, \qquad \mbox{ for all} \ k \geq 2,
\end{equation*}
for some constant $C>0$ independent of $k$, $\varepsilon$ and $\xi$. 
\end{description}

\smallbreak
Let us note that in assumption {\bf H2} there are no requests on the sign of the first eigenvalue, since what it is crucial is the presence of a spectral gap, encoded in the request {$\lambda^\varepsilon_k(\xi) \leq -C$}.
More precisely, assumption {\bf H2} requires that there exists one eigenvalue that goes to zero as $\e \to 0$ (either positive or negative),
while all the other ones are negative and bounded away from zero. 
This property will be translated into the fact that all the components of the perturbation, except the first one, have a very fast decay in time, 
and the slow motion for the internal interface will only be a consequence of the location of the first eigenvalue. 
Indeed, heuristically, the long time dynamics is described by terms like $e^{\lambda_1^\e t}$, 
so that $\lambda_1^\e$  characterizes the speed rate of convergence of the time-dependent solution towards its equilibrium configuration. 
Hence, the smaller is $\lambda_1^\e$, the slower is the speed and the longer is the time of convergence, as expected.
 \end{itemize}

\subsection{Construction of the family $U^\e$}
In order to apply the strategy just described, we start with the construction of the family of approximated steady states $\{ U^\e\}$. 
There are several possible choices (see, for instance, the MMAE used in \cite{ReynWard95, SunWard99}), 
one of them being to match at a given point $\xi \in I$ two stationary solutions of \eqref{burgers2} satisfying the left and the right boundary conditions, respectively, together with the request $U^\e(\xi;\xi)=0$.
\smallbreak
Precisely, denoted by $U^\e_-$ and $U^\e_+$ the smooth stationary solutions of \eqref{burgers2} in the intervals $(-\ell,\xi)$ and $(\xi, \ell)$ respectively, which satisfy
\begin{equation*}
U^\e_-(-\ell;\xi)=u_-, \quad U^\e_+(\ell;\xi)=u_+ \quad {\rm and } \quad U^\e_-(\xi;\xi)=U^\e_+(\xi;\xi)=0,
\end{equation*}
we define the generic element of the family $\{U^\e\}_{\xi \in I}$ as
\begin{equation}\label{Uapprox}
	U^{\varepsilon}(x;\xi)=\left\{\begin{aligned}
		&U^\e_-(x;\xi) &\qquad & x \in {(-\ell,\xi)} \\
		&U^\e_+ (x;\xi)&\qquad & x \in {(\xi,\ell).}
           \end{aligned}\right.
\end{equation}
In order to show that assumption {\bf H1} is satisfied, we now need to compute $\mathcal P^\e[U^\e]$ with $U^\e$ given as in \eqref{Uapprox}, showing that this term is indeed small with respect to $\e$.

Recalling that the decreasing steady state to \eqref{burgers2} is implicitly given by \eqref{ISS} with $C=-\kappa$, we define 
$$
\Psi(\k,u):= \int_0^{u}  \frac{\sqrt{\e^2-(f(s)-\kappa)^2}}{\k-f(s)} \, ds.
$$
Similar computations as the ones done in {Section \ref{sezSS}} show that
\begin{equation*}
\begin{aligned}
&\Psi(\cdot, u_-)  \ \ \mbox{is decreasing},  \quad &\Psi(f(u_-),u_-)= +\infty, \\
&\Psi(\cdot, u_+) \ \ \mbox{is increasing},  \quad &\Psi(f(u_+),u_+)= -\infty.
\end{aligned}
\end{equation*}
Moreover, $\displaystyle \lim_{\k \to \e} \Psi(\k, u_\pm)= c_\pm$, being $c_\pm \lessgtr 0$ and  $\displaystyle \lim_{\e \to 0}c_\pm =0$ (for example, if $f(u)=u^2/2$, then {$c_{\pm}=\pm c_D/2$}, where $c_D$ has been defined in Theorem \ref{teodecr}). Then, there exist unique $\k_\pm =\k_\pm(\xi) \in (f(u_\pm),\e)$ such that
\begin{equation*}
\Psi(\k_\pm,u_\pm) \pm \ell= \xi,
\end{equation*}
provided $-\ell+c_-<\xi<\ell+c_+$; this is a small (since $c_\pm$ are small in $\e$) restriction on the choice of $\xi$ which implies that $|\xi|\neq\ell$. The corresponding functions $U^\e_\pm$ are implicitly defined by
\begin{equation}\label{Uimplicit}
\Psi(\k_\pm, U^\e_\pm {(x; \xi)}) =\xi-x.
\end{equation}
Because of the construction of the generic element of the family $\{ U^\e \}$, it is easy to check that the error made by $U^\e$ from being the exact steady state of the problem is concentrated in the  gluing point  $x=\xi$; precisely, a straightforward computation shows that
\begin{equation*}
\begin{aligned}
\langle \psi(\cdot ), \mathcal P^\e[U^\e(\cdot;\xi)] \rangle &=  \psi(\xi)(\k_-(\xi)-\k_+(\xi)) \quad \mbox{ for any} \ \ \psi \in C^1(I),
\end{aligned}
\end{equation*}
so that, in distributional sense, it is
\begin{equation*}
\mathcal P^\e[U^\e(\cdot; \xi)]=(\k_-(\xi)-\k_+(\xi)) \delta_{x=\xi},
\end{equation*}
and we  need to evaluate  $\k_--\k_+$ in order to give an expression of $\Omega^\e(\xi)$ as defined in assumption {\bf H1}.
To approximately compute such difference, we observe that, in view of the convexity of the flux function $f$, the following bounds hold: 
\begin{equation}\label{boundF}
\begin{aligned}
&f(u_\pm)+f'(u_+)(u-u_+) \leq f(u) \leq \frac{f(u_\pm)}{u_+} u,  \qquad & u \in [u_+,0],\\
& f(u_\pm)-f'(u_-)(u_- -u) \leq f(u) \leq \frac{f(u_\pm)}{u_-} u, \qquad & u \in [0, u_-]. 
\end{aligned}
\end{equation}
On the one side, $\k_+$ is implicitly defined by
\begin{equation*}
\xi-\ell= \int_0^{u_+} \frac{\sqrt{\e^2-(f(s)-\k_+)^2}}{\k_+-f(s)} \, ds \geq \int_0^{u_+} \frac{\sqrt{\e^2}}{\k_+-f(s)} \, ds
\end{equation*}
{(recall that $u_+$ is negative and $k_+ - f(s) > 0$).}
Hence, by using the upper bound \eqref{boundF} for $u \in [u_+,0]$, we obtain
\begin{equation*}
\begin{aligned}
\frac{\xi-\ell}{\e} &\geq \int_{u_+}^0 \frac{ds}{f(s)-\k_+} \geq \int_{u_+}^0 \frac{ds}{ \frac{f(u_\pm)}{u_+} s-\k_+} = \frac{u_+}{f(u_\pm)}\log \left(\left|\frac{f(u_\pm)}{u_+} s-\k_+\right|\right) \Big|^{0}_{u_+}, \\
\end{aligned}
\end{equation*}
that is,
\begin{equation*}
\begin{aligned}
e^{\frac{f(u_\pm)}{u_+}\frac{\xi-\ell}{\e} }&\leq   \frac{\k_+}{\k_+- f(u_\pm)} \quad  \Longrightarrow \quad \k_+ e^{\frac{f(u_\pm)}{u_+}\frac{\xi-\ell}{\e}} -f(u_\pm)e^{\frac{f(u_\pm)}{u_+}\frac{\xi-\ell}{\e}} \leq \k_+.\\
\end{aligned}
\end{equation*}
In particular, by summing and subtracting $f(u_\pm)$ on the right hand side, the following bound for the difference $\k_+-f(u_\pm)$ holds: 
\begin{equation*}
\k_+-f(u_\pm) \leq \frac{f(u_\pm)}{e^{\frac{f(u_\pm)}{u_+}\frac{\xi-\ell}{\e}}-1}.
\end{equation*}
On the other side, we deduce
\begin{equation*}
\begin{aligned}
\xi-\ell&= \int_0^{u_+} \frac{\sqrt{\e^2-(f(s)-\k_+)^2}}{\k_+-f(s)} \, ds \leq \int_{u_+}^0 \frac{\e-f(s)+\k_+}{f(s)-\k_+} \, ds \\ & \leq u_+ + \e \int_{u_+}^0 \frac{ds}{f(u_\pm)+f'(u_+)(s-u_+)-\k_+} ,
\end{aligned}
\end{equation*}
where, this time, we used the lower bound in \eqref{boundF}. By doing similar computations as above, we end up with
\begin{equation*}
\k_+-f(u_\pm) \geq \frac{u_+f'(u_+)}{e^{f'(u_+)\frac{\xi-\ell-u_+}{\e}}-1}.
\end{equation*}
As concerning $\k_-$, we have
$$
\xi+\ell= \int_0^{u_-} \frac{\sqrt{\e^2-(f(s)-\k_-)^2}}{\k_--f(s)} \, ds \leq \int_0^{u_-} \frac{\e}{\k_--f(s)} \, ds \leq \e\int_{0}^{u_-} \frac{ds}{\k_- -\frac{f(u_\pm)}{u_-} s}, 
$$
and 
\begin{equation*}
\begin{aligned}
\xi+\ell&= \int_0^{u_-} \frac{\sqrt{\e^2-(f(s)-\k_-)^2}}{\k_--f(s)} \, ds \geq \int_0^{u_-} \frac{\e+f(s)-\k_-}{\k_--f(s)} \, ds \\
&\geq -u_-+ \e \int_{0}^{u_-} \frac{ds}{\k_-+f'(u_-)(u_--s)-f(u_\pm)};
\end{aligned}
\end{equation*}
we can thus proceed as before to obtain upper and lower bounds on the difference $\k_- - f(u_\pm)$. In conclusion, collecting all the computations we have
\begin{equation}\label{kpm}
\begin{aligned}
\frac{u_+f'(u_+)}{\exp\left(f'(u_+){(\xi-\ell-u_+)/}{\e}\right)-1}\leq \k_+ - f(u_\pm) &\leq \frac{f(u_\pm)}{\exp\left({f(u_\pm)(\xi-\ell)/{\e \, u_+}}\right)-1}, \\
\frac{u_-f'(u_-)}{\exp\left(f'(u_-){(\xi+\ell+u_-)/}{\e}\right)-1}\leq \k_- - f(u_\pm) & \leq \frac{f(u_\pm)}{\exp\left({f(u_\pm)(\xi+\ell)/{\e \, u_-}}\right)-1}.
\end{aligned}
\end{equation}
\smallbreak
\noindent These bounds show that  the difference $|\k_--\k_+|$ is {\it exponentially small} with respect to $\e$ if we properly choose the boundary conditions; 
indeed, the convergence to zero of $\k_--\k_+$ is dictated by terms like $1/\exp (\mp f'(u_\pm)/\e )$ and $1/\exp (f(u_\pm)/\e u_\pm)$ on both sides. 
Therefore, the error $\Omega^\e$ is exponentially small for $\e\to0$, uniformly in any compact subset of $(-\ell,\ell)$, provided $u_\pm$ are chosen so that
$$\lim_{\e \to 0}\frac{f'(u_\pm)}{\e} = \mp\infty \qquad \mbox{and} \qquad \lim_{\e \to 0} \frac{f(u_\pm)}{\e u_\pm} = \mp\infty.$$
Furthermore, because of the properties of the function $\Psi(\cdot,u)$, the difference 
$
g(\xi):= \k_-(\xi)-\k_+(\xi)
$
is monotone decreasing and such that $\displaystyle\lim_{\xi \to \pm \ell+c_\pm} g(\xi)\lessgtr0$.
As a consequence, there exists a unique value $\bar \xi$ such that $g(\bar \xi)=0$ and  $U^\e(x;\bar \xi)$ is the unique steady state of the system. 
Assumption {\bf H1} is thus satisfied. 

For instance, in the case of a Burgers flux $f(u)=u^2/2$, we have $f'(u_\pm)=u_\pm$; if $u_\pm=\mp\sqrt{\e}$,
there exist positive constants $c_1$ and $ c_2$  such that
\begin{equation}\label{bounderror}
|\Omega^\e(\xi)|= |\k_-(\xi)-\k_+(\xi)| \leq  c_1 \, \e  \,  e^{-c_2(\ell-|\xi|)/ \sqrt{\e}},
\end{equation}
and we have $\Omega^\e(0)=0$, namely $\bar\xi=0$.

Let us stress that the choice of the boundary data is fundamental in order to obtain an exponentially small error $\Omega^\e$; 
indeed, if we chose $\vert u_\pm \vert=\e$ (rather than $\sqrt{\e}$), the error would only be algebraically small in $\e$, being its convergence to zero dictated by  $u_\pm f'(u_\pm)$ and $f(u_\pm)$.

\subsection{Linearization and spectral analysis}
In order to linearize the original equation, consider the decomposition \eqref{decu}, where we are taking $U^\e$ smooth 
(for instance, $U^\e$ can be chosen as a smoothed version of \eqref{Uapprox}, see Remark \ref{approssimataU} below). 
Inserting \eqref{decu} into \eqref{burgers2} and recalling that $\xi(t)$ depends on time, we end up with the following PDE for the perturbation $v$:
\begin{equation}\label{eqv}
\d_t v = \mathcal L^\e v + \mathcal P^\e[U^\e]- \d_{\xi} U^\e \frac{d\xi}{dt} + \mathcal R[v,\xi],
\end{equation}
where $\mathcal L^\e v$ is the linearized operator obtained after the linearization \eqref{decu}, while $\mathcal R[v,\xi]$ collects all the higher order terms in $v$.

\smallbreak
Precisely, a straightforward computation shows that
\begin{equation}\label{linoperator}
\mathcal L^\e v = \frac{\e\d_x^2v}{\left(1+ {\d_x U^\e}^2\right)^{3/2}}- \frac{3 \e \d_x^2U^\e \d_xU^\e }{\left(1+ {\d_x U^\e}^2\right)^{5/2}} \d_x v-\d_x (f'(U^\e) v),
\end{equation}
and the equation for $v$ reads
$$
\d_t v = \mathcal L^\e v + \e\,\d_x \left( \frac{\d_xU^\e}{\sqrt{\left(1+ {\d_x U^\e}^2\right)}}\right)- \d_x f(U^\e)- \d_{\xi} U^\e \frac{d\xi}{dt}+ \mathcal R[v,\xi],
$$
which is exactly \eqref{eqv}, being
\begin{equation*}
 \mathcal P^\e[U^\e]=\e \, \d_x \left( \frac{\d_xU^\e}{\sqrt{\left(1+ {\d_x U^\e}^2\right)}}\right)- \d_x f(U^\e),
\end{equation*}
that is small in $\e$ according to assumption {\bf H1}.

\smallbreak
We now mean to verify assumption {\bf H2}; we thus have to exploit spectral properties of the linearized operator $\mathcal L^\e$ defined in \eqref{linoperator}.
Henceforth, for the sake of simplicity, we consider a flux function of Burgers type, i.e., $$f(u)=\frac{u^2}2.$$
We recall that, in this case, $u_\pm=\mp u_*$ for some $u_*\in(0,\sqrt{2\e})$, in view of \eqref{ipodatibordo} and \eqref{ipof}.

\smallbreak

Let us thus rewrite  the linearized operator $\mathcal L^\e$ given in \eqref{linoperator} as
\begin{equation}\label{linopSL}
	\mathcal L^\e v =p(x)\d_x^2 v+ q(x) \d_x v+r(x) v,
\end{equation}
where
\begin{equation}\label{defpqr}
	\begin{aligned}
	p(x):= \frac{\e}{\left(1+ {\d_x U^\e}^2\right)^{3/2}}, \quad
	q(x):=-\frac{3 \e \d_x^2U^\e \d_xU^\e }{\left(1+ {\d_x U^\e}^2\right)^{5/2}}-U^\e \quad \mbox{and} \quad
	r(x):= - \d_xU^\e.
	\end{aligned}
\end{equation}
A straightforward computation shows that, if defining the weight $\rho$ as
\begin{equation*}
	\rho(x):= \exp\left(-\frac{1}{\e}\int_{x_0}^x a^\e(y) \, dy\right), \quad a^\e(x)= U^\e(1+{\d_xU^\e}^2)^{3/2},
\end{equation*} 
the following identity holds:
\begin{equation}\label{defdiLrho}
	(\rho\mathcal  L^\e )v:=\mathcal L^\e_\rho v = \d_x\left(\rho(x)p(x)\d_xv\right)+ \rho(x)r(x) v.
\end{equation}
Going further, for $v,w \in H^1(I)$ it holds that
\begin{equation}\label{Lagrangeidentity}
	\begin{aligned}
	\langle v, \mathcal L^\e w\rangle_{\rho} &= \int_{I} v \left( p(x)\d_x^2w+q(x)\d_x w+r(x)w\right) \, \rho \, dx \\
	&=\int_{I} v \left[\d_x\left(\rho \,  p(x) \d_xw\right)+r(x)w\rho\right] \, dx \\
	&=-\int_{I} \left[\d_xv \left(\rho \,  p(x) \d_xw\right)+r(x)w v\rho \right]\, dx  + v \rho \, p(x) \d_xw \big|^{\ell}_{-\ell} \\
	&=\int_{I} \left[\d_x\left(\d_xv \rho \,  p(x)\right) w+r(x)w v\rho \right]\, dx -\d_xv\rho \, p(x) w\big|^{\ell}_{-\ell} + v \rho \, p(x) \d_xw \big|^{\ell}_{-\ell}\\
	&=\langle\mathcal L^\e v,  w\rangle_{\rho}+ v \rho \, p\d_x w \big|^{\ell}_{-\ell}-\d_xv\rho \, p w\big|^{\ell}_{-\ell},
	\end{aligned}
\end{equation}
where $\langle u , v\rangle_{\rho}= \int_{I} u v \rho \, dx$ is the scalar product in the weighted space $L^2_\rho(I)$. 
The operator $\mathcal L^\e$ is thus formally self-adjoint in $L^2_\rho(I)$,  according to the definition given in \cite[Section 2.3]{SL}.
Formally self-adjoint operators in $L^2_\rho(I)$ satisfy the following statement, coming from the classical Sturm-Liouville theory (cf., e.g., \cite[Section 2.4]{SL}).
\begin{proposition}\label{SL}
Let $\mathcal L$ be a formally self-adjoint operator on $L^2_\rho(I)$ having the form 
$$
	\mathcal{L} v=\partial_x (\rho(x) p(x) \partial_x v)+ \rho(x) r(x) v,
$$
with $p,r, \rho \in C([-\ell,\ell])$ such that $p,\rho >0$ in $[-\ell,\ell]$. Then, the problem
$$
	\mathcal Lv =\lambda \rho v
$$
has an infinite sequence of real eigenvalues $\{\lambda_k\}_{k \in \N}$, such that
\begin{equation*}
	\dots < \lambda_3 < \lambda_2 < \lambda_1, \quad \mbox{and} \quad \lambda_k \to -\infty \ \mbox{as} \ k \to \infty;
\end{equation*}
moreover, 
\begin{equation}\label{UB}
	\lambda_1 \leq \max_{x \in I} r(x).
\end{equation}
Finally, for each eigenvalue $\lambda_k$ there exists a single corresponding eigenfunction $\varphi_k$, having exactly $k-1$ zeros in $I$, 
and the set of the eigenfunctions is an orthonormal basis in the weighted space $L^2_\rho(I)$, i.e.,
\begin{equation*}
	\langle \varphi_i, \varphi_j \rangle_{\rho} = \delta_{ij}.
\end{equation*}
\end{proposition}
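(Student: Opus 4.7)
The plan is to reduce the eigenvalue problem to the spectral analysis of a compact self-adjoint operator on $L^2_\rho(I)$ and then to extract the nodal and multiplicity information from the underlying second-order ODE. First I would equip $\mathcal{L}$ with the homogeneous Dirichlet boundary conditions that arise naturally in the setting (killing both boundary contributions in \eqref{Lagrangeidentity}), set $M := \max_{[-\ell,\ell]} r$, and fix any $\mu > M$. For $v \in H^1_0(I)\setminus\{0\}$ an integration by parts yields
\begin{equation*}
\langle (\mu\rho - \mathcal{L})v, v\rangle_{L^2} = \int_I \rho p (\d_x v)^2\, dx + \int_I \rho(\mu - r) v^2\, dx,
\end{equation*}
which is coercive on $H^1_0(I)$ since $p$ and $\rho$ are strictly positive on the compact interval $[-\ell,\ell]$ and $\mu - r > 0$. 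Lax-Milgram then furnishes a bounded inverse $T := (\mu\rho - \mathcal{L})^{-1}$; composing with the Rellich embedding $H^1_0(I) \hookrightarrow L^2_\rho(I)$ gives a compact operator on $L^2_\rho(I)$, which is moreover self-adjoint and strictly positive in view of \eqref{Lagrangeidentity}.

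The spectral theorem for compact self-adjoint positive operators then provides a decreasing sequence of positive eigenvalues $\mu_k \to 0^+$ with a complete orthonormal system of eigenfunctions $\{\varphi_k\}$ in $L^2_\rho(I)$; setting $\lambda_k := \mu - \mu_k^{-1}$ yields the eigenvalues of $\mathcal{L}$, naturally ordered as $\lambda_1 > \lambda_2 > \cdots \to -\infty$. For the upper bound \eqref{UB}, I would invoke the variational characterization
\begin{equation*}
\lambda_1 = \sup_{v \in H^1_0(I)\setminus\{0\}} \frac{\int_I (\rho r v^2 - \rho p (\d_x v)^2)\, dx}{\int_I \rho v^2\, dx},
\end{equation*}
then discard the non-positive gradient contribution and bound $r \leq M$ pointwise.

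The simplicity of each eigenvalue is a one-dimensional feature: writing the ODE $\mathcal{L}v = \lambda \rho v$ as the first-order system $v' = w/(\rho p)$, $w' = (\lambda \rho - \rho r)v$, whose coefficients are continuous under the stated hypotheses, Picard-Lindelöf gives uniqueness for the initial value problem at $x = -\ell$; any two Dirichlet eigenfunctions associated with the same $\lambda_k$ thus share the data $v(-\ell)=0$ and must be proportional. The nodal count would be obtained from the classical Sturm comparison theorem applied to the equation in the Liouville form $\d_x(\rho p \d_x v) = (\lambda \rho - \rho r) v$: between any two consecutive zeros of a solution for some $\lambda$, every solution for a larger eigenvalue must vanish at least once, and an induction anchored at $\varphi_1$ (which has no interior zeros by positivity of the principal eigenfunction) produces exactly $k-1$ interior zeros for $\varphi_k$; a Pr\"ufer transformation would provide an alternative, perhaps cleaner, route.

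The main obstacle, in my view, is not the abstract spectral theory (Lax-Milgram plus Rellich plus the compact self-adjoint spectral theorem is routine) but rather transferring the classical Sturm oscillation and positivity-of-the-principal-eigenfunction arguments to the weighted setting under merely continuous coefficients. Once the equation is rewritten in Liouville form with the positive weight $\rho p$, the comparison arguments do apply verbatim, but this passage must be carried out carefully to match the weighted framework of \cite{SL} and to ensure that the strict inequality $\lambda_{k+1} < \lambda_k$ (rather than just $\leq$) genuinely follows from the oscillation theorem.
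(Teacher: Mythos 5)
Your proof is correct, but it is worth noting that the paper does not actually prove this proposition at all: it is stated as a known consequence of classical Sturm--Liouville theory and simply referenced to \cite[Section 2.4]{SL}. What you supply is therefore a self-contained derivation where the authors rely on a citation. Your route --- coercivity of the shifted form via Lax--Milgram, compactness from the Rellich embedding, the spectral theorem for compact self-adjoint operators, the Rayleigh quotient for \eqref{UB}, Picard--Lindel\"of for simplicity, and Sturm comparison (or Pr\"ufer) for the nodal count --- is the standard textbook proof and all the steps go through under the stated hypotheses, since $\rho p$ is continuous and bounded away from zero on the compact interval and the divergence-form writing requires no extra regularity. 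Two small points deserve attention if you were to write this out in full. First, to get self-adjointness of the resolvent in the \emph{weighted} space you should define $T$ by $f \mapsto v$ where $(\mu\rho-\mathcal{L})v=\rho f$ (i.e.\ the weight must appear on the source term), so that $\langle Tf,g\rangle_\rho=\langle f,Tg\rangle_\rho$ follows from the symmetry identity \eqref{Lagrangeidentity} with the Dirichlet boundary terms killed; as written, $(\mu\rho-\mathcal{L})^{-1}$ applied to an unweighted right-hand side is self-adjoint in the unweighted $L^2$ pairing instead. Second, the strict inequalities $\lambda_{k+1}<\lambda_k$ are exactly the simplicity statement, so your ODE uniqueness argument is what upgrades the spectral-theorem listing ``with multiplicity'' to a strictly decreasing sequence --- this is the right logical order and you should make it explicit rather than leave it to the oscillation theorem. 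Your closing worry about transferring the oscillation arguments to the weighted setting is not a real obstacle: once the equation is in Liouville form with positive continuous leading coefficient $\rho p$ and positive weight $\rho$, the classical comparison and Pr\"ufer arguments apply verbatim, which is precisely why the authors felt entitled to cite the result without proof.
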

\begin{remark}\label{approssimataU}
{\rm
To make Proposition \ref{SL} applicable to the operator $\mathcal{L}^\e$ defined in \eqref{linopSL},
we observe that the function $U^\e$ constructed in \eqref{Uapprox} is an $H^1$-function with a continuous derivative up to the jump located at $x=\xi$, 
and $C^k$ is dense in $H^1$ for arbitrarily large $k$; therefore, we can approximate $U^\e$ with a smooth function up to an arbitrarily small error. 
Henceforth, we will thus work with a smooth approximation of $U^\e$, still denoted by $U^\e$, and apply Proposition \ref{SL}.
}
\end{remark}
We now show that we can actually improve the upper bound \eqref{UB}; 
precisely, we aim at proving that $\sigma(\mathcal L^\e) \subset (-\infty,0)$, that is all the eigenvalues of $\mathcal L^\e$ are {\it negative}. 
To this aim, we need  the following preliminary lemma.

\begin{lemma}\label{lemmaUx}
Let $U$ be the exact stationary solution to \eqref{burgers2}; 
then $\d_x U$ satisfies
\begin{equation*}
	L (\d_x U) =0, \quad Lv:=\bar p(x)\d_x^2 v+ \bar q(x) \d_x v+\bar r(x) v,
\end{equation*}
where $\bar p, \bar q$ and $\bar r$ are defined as in \eqref{defpqr}, replacing   $U^\e$ by $U$.
\end{lemma}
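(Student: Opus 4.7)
The plan is to exploit the fact that the operator $L$ in the statement is, by its very construction, the linearization of $\mathcal{P}^\varepsilon$ around the exact steady state $U$ — it is exactly the expression in \eqref{linoperator} with $U^\varepsilon$ replaced by $U$ (so that the terms $-U$ and $-\partial_x U$ appearing in $\bar q$ and $\bar r$ are $-f'(U)$ and $-f''(U)\partial_x U$ in the Burgers case). From this point of view the lemma is a manifestation of the translation invariance of the autonomous operator $\mathcal{P}^\varepsilon$: since $U$ satisfies $\mathcal{P}^\varepsilon[U]=0$, the family $s\mapsto U(\cdot+s)$ also satisfies $\mathcal{P}^\varepsilon[U(\cdot+s)]=0$, and differentiating this identity at $s=0$ gives $L(\partial_x U)=0$.

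For a self-contained verification, the strategy is to differentiate once in $x$ the stationary equation
$$
\varepsilon\,\partial_x\!\left(\frac{\partial_x U}{\sqrt{1+(\partial_x U)^2}}\right)=\partial_x f(U),
$$
after rewriting the left-hand side as $\varepsilon\,\partial_x^2 U/(1+(\partial_x U)^2)^{3/2}$. A single application of the chain rule produces on the left the terms
$$
\frac{\varepsilon\,\partial_x^3 U}{(1+(\partial_x U)^2)^{3/2}}-\frac{3\varepsilon\,\partial_x U\,(\partial_x^2 U)^2}{(1+(\partial_x U)^2)^{5/2}},
$$
and on the right the terms $f''(U)(\partial_x U)^2+f'(U)\,\partial_x^2 U$. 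Rearranging everything on one side and reading off the coefficients multiplying $\partial_x^2(\partial_x U)$, $\partial_x(\partial_x U)$ and $\partial_x U$ yields precisely $\bar p\,\partial_x^2(\partial_x U)+\bar q\,\partial_x(\partial_x U)+\bar r\,(\partial_x U)$ with the $\bar p,\bar q,\bar r$ of \eqref{defpqr} (with $U^\varepsilon$ replaced by $U$).

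There is no substantive obstacle: the argument reduces to an exercise in the chain rule. The only point requiring mild care is that the differentiated convective term $f''(U)(\partial_x U)^2+f'(U)\,\partial_x^2 U$ has to be distributed between the first–order coefficient $\bar q$ (through the $-f'(U)=-U$ contribution) and the zeroth–order coefficient $\bar r$ (through the $-f''(U)\,\partial_x U=-\partial_x U$ contribution), so as to match the decomposition used in \eqref{defpqr}. Once this identification is made, all terms cancel and the identity $L(\partial_x U)=0$ follows.
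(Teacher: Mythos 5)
Your proof is correct, but it takes a genuinely different route from the paper's. You differentiate the stationary ODE $\varepsilon\,\partial_x^2 U/(1+(\partial_x U)^2)^{3/2}=f'(U)\,\partial_x U$ once in $x$ and match the resulting identity with $\bar p\,\partial_x^2(\partial_x U)+\bar q\,\partial_x(\partial_x U)+\bar r\,\partial_x U=0$, after distributing $\partial_x\bigl(f'(U)\,\partial_x U\bigr)=f'(U)\,\partial_x^2 U+f''(U)(\partial_x U)^2$ between the first- and zeroth-order coefficients (for the Burgers flux $f'(U)=U$ and $f''(U)=1$, which matches the $-U$ in $\bar q$ and the $-\partial_x U$ in $\bar r$ of \eqref{defpqr}); your opening remark that $\partial_x U$ is the Goldstone mode of the translation-invariant operator $\mathcal P^\varepsilon$ is the standard conceptual reason the lemma must hold, and the chain-rule computation makes it rigorous. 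The paper instead works with the weighted, formally self-adjoint form \eqref{defdiLrho}: it writes $\rho L(\partial_x U)=\partial_x(\rho\,\bar p\,\partial_x^2 U)+\rho\,\bar r\,\partial_x U$, uses $\bar r=-\partial_x U$ together with the stationary relation $\bar p\,\partial_x^2 U=U\,\partial_x U$ to reduce this to $-\partial_x(\rho\,\bar r)\,U$, and then invokes the explicit formula for the weight $\rho$ to see that this vanishes. Your route is more elementary and self-contained (it needs neither the identity $\partial_x(\rho\,\bar p)=\rho\,\bar q$ nor the formula for $\rho$), whereas the paper's computation stays inside the Sturm--Liouville framework it has just set up, which is the form in which the operator is exploited in the subsequent spectral analysis. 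Both arguments are valid and yield the same conclusion.
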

\begin{proof}
Recalling that $U$ solves
\begin{equation*}
	\frac{\e \d_x^2U}{(1+\d_x U^2)^{3/2} }= U \d_xU,
\end{equation*}
{namely $\bar p(x) \d_x^2U= U\d_xU$},
we have
\begin{equation*}
	\begin{aligned}
	\d_x\left(\rho \, \bar p(x) \d_x^2U\right) &+ \rho \bar r(x) \d_xU  \\
	&=\d_x\left(\rho \, \bar p(x) \d_x^2U+ \rho \bar r (x) U\right)- \d_x(\rho\bar r(x)) U  \\
	&=- \d_x(\rho \bar r(x)) U,
	\end{aligned}
\end{equation*}
where in the last equality we used the explicit form of $\bar r$. 
Finally
\begin{equation*}
	\begin{aligned}
	-\d_x(\rho \bar r) &= \rho \d_x^2U+\d_x\rho \d_xU \\
	&=\rho \left[ \d_x^2U-\frac{1}{\e} U\d_x U(1+\d_xU^2)^{3/2}\right]=0, \\
	\end{aligned}
\end{equation*}
since $U$ is the stationary solution.
\end{proof}

\begin{remark}\label{remlemmaUx}{\rm
When considering an element of the family of {\it approximate steady states}, 
formula \eqref{bounderror} states that $U^\e$ solves the stationary equation up to an error that is exponentially small in $\e$, having chosen $u_*=\sqrt\e$. 
Hence, by performing the same computations of Lemma \ref{lemmaUx} with $\mathcal L^\e$ defined as in \eqref{linopSL}-\eqref{defpqr}, 
we can state that $\mathcal L^\e( \d_x U^\e)$ is exponentially small in $\e$, that is, 
the first eigenfunction $\varphi^\e_1$ of $\mathcal L^\e$ relative to the eigenvalue $\lambda^\e_1$ is approximately given (up to its sign) by $\d_xU^\e$.

}
\end{remark}

\begin{proposition}
The eigenvalues $\{\lambda^\e_k\}_{k \in \N}$ of $\mathcal L^\e$ are negative.
\end{proposition}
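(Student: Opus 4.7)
The plan is to reduce the statement to proving $\lambda_1^\e < 0$: by Proposition \ref{SL} the eigenvalues form a strictly decreasing sequence $\lambda_1^\e > \lambda_2^\e > \dots$, so once the largest one is shown to be negative, so are the others. Since $v := u - U^\e$ inherits homogeneous Dirichlet boundary conditions from the matching $u(\pm\ell, t) = u_\pm = U^\e(\pm\ell;\xi)$, the Rayleigh characterization of $\lambda_1^\e$ in the weighted space $L^2_\rho$ reads
\begin{equation*}
\lambda_1^\e = \sup_{v\in H^1_0(I)\setminus\{0\}} \frac{-\int_I \rho\, p\, (\partial_x v)^2\, dx + \int_I \rho\, r\, v^2\, dx}{\int_I \rho\, v^2\, dx},
\end{equation*}
so it suffices to check that the numerator is strictly negative for every nontrivial admissible $v$.

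The key idea is a Picone-type factorization through $\phi^\e := -\partial_x U^\e$, which is strictly positive on $[-\ell,\ell]$ since $U^\e$ is (a smooth version of) a decreasing function and which, by Remark \ref{remlemmaUx} (a perturbative version of Lemma \ref{lemmaUx}), is an almost-eigenfunction: $\mathcal L^\e \phi^\e$ is exponentially small in $\e$. Since $\phi^\e$ is continuous and bounded away from zero on $[-\ell,\ell]$, every $v \in H^1_0(I)$ admits a unique factorization $v = \phi^\e w$ with $w \in H^1_0(I)$. Expanding $(\partial_x v)^2 = (\phi^\e)^2 (\partial_x w)^2 + 2\phi^\e \partial_x\phi^\e\, w\, \partial_x w + w^2 (\partial_x \phi^\e)^2$, integrating by parts the cross term (the boundary contributions vanish thanks to $w(\pm\ell) = 0$), and recognizing the action of $\mathcal L^\e_\rho$ on $\phi^\e$ in the surviving terms, I expect to arrive at the identity
\begin{equation*}
\int_I \rho\, p\, (\partial_x v)^2\, dx - \int_I \rho\, r\, v^2\, dx = \int_I \rho\, p\, (\phi^\e)^2 (\partial_x w)^2\, dx - \int_I \rho\, \phi^\e\, (\mathcal L^\e \phi^\e)\, w^2\, dx.
\end{equation*}
The first integral on the right is strictly positive whenever $v \not\equiv 0$: otherwise $w$ would be constant and, combined with the Dirichlet condition, identically zero, yielding $v \equiv 0$ too.

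The technical obstacle is to absorb the second integral, which is not a priori of definite sign. Combining the exponential smallness $\|\mathcal L^\e \phi^\e\|_{L^\infty} \leq C e^{-c/\sqrt\e}$ from Remark \ref{remlemmaUx} with the Poincar\'e inequality $\|w\|_{L^2}^2 \leq (2\ell/\pi)^2 \|\partial_x w\|_{L^2}^2$ and the uniform boundedness of $\rho \phi^\e$, one obtains
\begin{equation*}
\Bigl|\int_I \rho\,\phi^\e\,(\mathcal L^\e\phi^\e)\,w^2\, dx\Bigr| \leq C' e^{-c/\sqrt\e} \int_I (\partial_x w)^2\, dx,
\end{equation*}
while the positive term is bounded from below by $c_0 \int_I (\partial_x w)^2\, dx$ thanks to the uniform positivity of $\rho p (\phi^\e)^2$ on $[-\ell,\ell]$. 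For $\e$ small enough the leading term dominates, the quadratic form is strictly positive, and $\lambda_1^\e < 0$ follows; Proposition \ref{SL} then propagates the sign to all $\lambda_k^\e$. The subtle point is to ensure that both the lower bound $\phi^\e \geq c > 0$ and the exponential estimate on $\mathcal L^\e \phi^\e$ are uniform with respect to $\xi$ on compact subsets of $I$, which is consistent with the bound \eqref{bounderror} already recorded.
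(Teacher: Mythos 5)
Your reduction to $\lambda_1^\e<0$ via Proposition \ref{SL}, the weighted Rayleigh quotient, and the Picone identity obtained by writing $v=\phi^\e w$ with $\phi^\e=-\partial_x U^\e$ are all sound in principle, but the final absorption step contains a genuine gap, and it hinges on a false claim. You assert that $\phi^\e$ is bounded away from zero so that $\rho\,p\,(\phi^\e)^2$ has a usable uniform lower bound; however, the paper computes explicitly that
\begin{equation*}
\partial_x U^\e(\pm\ell)\approx-\frac{u_*^2}{\e}\,e^{-u_*(\ell-|\xi|)/\e},
\end{equation*}
so $\phi^\e(\pm\ell)$ is \emph{exponentially small} in $\e$ and the weight $\rho\,p\,(\phi^\e)^2$ degenerates exponentially at the endpoints. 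Your positive term therefore controls $\int_I(\partial_x w)^2\,dx$ only with an exponentially small constant --- of the same exponential order as the bound \eqref{bounderror} on the error $\mathcal L^\e\phi^\e$ supplied by Remark \ref{remlemmaUx}. The comparison ``exponentially small error versus exponentially degenerate coercivity constant'' is not settled by letting $\e\to0$; one would have to track the precise exponential rates and the localization of $\mathcal L^\e\phi^\e$ (which is concentrated near the gluing point $\xi$, where $\phi^\e$ is in fact large, not small), none of which you do. As written, ``for $\e$ small enough the leading term dominates'' does not follow. Note also that, even if repaired, your argument would only yield negativity for $\e$ small, whereas the statement holds for every $\e$.

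The paper's own proof avoids all of this: it integrates the relation $\mathcal L^\e\varphi_1^\e=\lambda_1^\e\varphi_1^\e$ over $I$, uses the identity $p'=q_1$ so that every interior term telescopes into boundary contributions, and concludes $\lambda_1^\e\int_I\varphi_1^\e\,dx=p(\ell){\varphi_1^\e}'(\ell)-p(-\ell){\varphi_1^\e}'(-\ell)<0$ from the signs of ${\varphi_1^\e}'(\pm\ell)$ for the positive first eigenfunction, with no smallness of $\e$ and no use of the almost-eigenfunction property of $\partial_x U^\e$. If you wish to keep a variational route, you would at least need to replace the uniform lower bound on $\phi^\e$ by a weighted Poincar\'e--Hardy inequality adapted to the degenerate weight $\rho\,p\,(\phi^\e)^2$ and check quantitatively that the resulting constant beats the error; the integration-by-parts argument is both shorter and unconditional.
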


\begin{proof}
Let us integrate the relation $\mathcal L^\e\varphi^\e_1= \lambda^\e_1 \varphi^\e_1$, with $\mathcal L^\e$ defined as in \eqref{linoperator} or, in shortest notation,
\begin{equation*}
	\mathcal L^\e v:= p(x) \d_x^2 v+ q_1(x) \d_x v + \d_x (U^\e v).
\end{equation*}
By Proposition \ref{SL}, we can assume without loss of generality $\varphi^\e_1 >0$ in $I$. 
Observing that $p'(x)=q_1(x)$, we obtain
\begin{equation*}
	\begin{aligned}
	\lambda^\e_1 \int_I \varphi^\e_1 \, dx &= \int_Ip\,{\varphi^\e_1}'' \, dx+ \int_I q_1 {\varphi^\e_1}' \, dx- \int_I (f'(U^\e)\varphi^\e_1)' \, dx \\
	&=p\,{\varphi^\e_1}'\big|^{\ell}_{-\ell} - \int_I p'\, {\varphi^\e_1}' \, dx+ \int_I q_1 {\varphi^\e_1}' \, dx-(U^\e \varphi^\e_1)\big|^{\ell}_{-\ell} \\
	&=p(\ell){\varphi^\e_1}'(\ell)-p(-\ell){\varphi^\e_1}'(-\ell),
	\end{aligned}
\end{equation*}
where in the last line we used the fact that $\varphi^\e_1(\pm \ell)=0$. 
Since $\int_I \varphi^\e_1 >0$, $p >0$ and ${\varphi^\e_1}'(\pm\ell)\lessgtr0$, for some positive constant $C>0$ we have that
\begin{equation*}
	C\lambda^\e_1 <0,
\end{equation*}
and the proof is completed recalling that $\{ \lambda^\e_k\}_{k \in \N}$ is a decreasing sequence.
\end{proof}

\subsection{Asymptotics for the first and second eigenvalues}

\smallbreak
We first give an estimate of $\lambda_1^\e$, obtained by
 applying the identity \eqref{Lagrangeidentity} with $w=\varphi^\e_1 > 0$ and $v=1/\rho$.  
 We get
\begin{equation}\label{roughlambda1}
	\begin{aligned}
	\lambda_1^\e\int_I\varphi^\e_1\,dx  &= \langle\mathcal L^\e\left(1/\rho\right),  \varphi^\e_1\rangle_\rho+  p\, {\varphi^\e_1}' \big|^{\ell}_{-\ell}+ \frac{\rho_x}{\rho} p\,\varphi^\e_1\big|^{\ell}_{-\ell} \\
	&=p\,{ \varphi^\e_1}' \big|^{\ell}_{-\ell},
	\end{aligned}
\end{equation}
where we used $\mathcal L^\e (1/\rho) =0$ and the fact that $\varphi^\e_1(\pm\ell)=0$. 
Moreover, by differentiating the implicit expressions of $U^\e_{\pm}$  given in \eqref{Uimplicit}, we deduce
\begin{equation*}
	\d_x U^\e_{\pm}= \frac{f(U^\e_{\pm})-\k_{\pm}}{\sqrt{\e^2-(f(U^\e_{\pm})-\k_{\pm})^2}}
\end{equation*}
and this, together with  \eqref{kpm}, leads to
\begin{equation*}
	\d_x U^\e(\pm \ell) = \frac{ f(u_\pm)-{\k_\pm}}{\sqrt{\e^2 -(f(u_\pm)-\k_\pm)^2}} \approx   -\frac{u_*^2}{\e} \, e^{-u_*(\ell-|\xi|)/{\e}}.
\end{equation*}
Finally, from Remark \ref{remlemmaUx}, we can state that $\int \varphi^\e_1 \, dx\approx  u_*$ and
\begin{equation*}
	\begin{aligned}
	{\varphi^\e_1}'(\pm\ell) \approx - \e^{-1}\,  U^\e(\pm\ell) \d_x U^\e(\pm \ell) (1+ {\d_xU^\e(\pm \ell)}^2)^{3/2} \approx \mp u_* ^3 \e^{-2}e^{-u_*(\ell-|\xi|)/{\e}},
	\end{aligned}
\end{equation*}
so that, recalling that $p(x) \approx \e$, from \eqref{roughlambda1} we  have
\begin{equation}\label{rough}
	\lambda_1^\e(\xi)\approx-u_*^2 \e^{-1}e^{-u_*(\ell-|\xi|)/\e},
\end{equation}
In particular, if $u_*=\sqrt{\e}$, then 
$$\lambda_1^\e(\xi)\approx-\,e^{-(\ell-|\xi|)/\sqrt\e}.$$

\smallbreak
\noindent As we already remarked, the large time behavior of the solution is dictated by terms of the order $e^{\lambda^\e_1 t}$; 
hence, as in the linear case, we expect  $\lambda_1^\e$ to give a good approximation of  the speed rate of convergence of the solution towards the asymptotic configuration.

This guess is somehow confirmed by numerical simulations. 
Table \ref{Table1} shows a numerical computation for the location of the shock layer for different values of the parameter $\e$ and $f(u) = u^2/2$. 
The initial datum for the function $u$ is $u_0(x) = u_*\left(\frac{1}{2}x^2 - x - \frac{1}{2}\right) $, being $u_*=\sqrt{\e}$. 
We can clearly see that the convergence to $\bar \xi =0$ is slower as $\e$ becomes smaller. 
\begin{table}[h!]
\begin{center}
\begin{tabular}{|c|c|c|c|}
\hline TIME $t$ &   $\xi(t)$, $\varepsilon=0.03$  &   $\xi(t)$, $\varepsilon=0.01$ &  $\xi(t)$, $\varepsilon=0.005$ 
\\ \hline
\hline $10$ & $-0.2952$ &$-0.3317$  &$-0.3507$  \\
\hline  $10^2$ & $-0.1607$ & $-0.3132$  &$-0.3272$  \\
\hline $10^3$ & $-0.0014$ &$-0.2555$ &$-0.3234$   \\
\hline $5\cdot10^3$ & $-0.0018*10^{-5}$ &$-0.1489$  &$-0.3086$  \\
\hline $10^4$ & $-0.0005*10^{-5}$ &$-0.0925$ &$-0.2936$ \\
\hline $5\cdot10^4$ & $0$ &$-0.0041$ &$-0.2283$  \\
\hline $10^5$ & $0$ &$-0.0009*10^{-1}$ &$-0.1887$  \\
\hline $10^6$ & $0$ &$0$ &$-0.0434$  \\
\hline
\end{tabular}
\caption{\small{The numerical location of the shock layer $\xi(t)$ for \eqref{burgers2}, for different values of the parameter $\varepsilon$.}}\label{Table1}
\end{center}
\end{table}

In Table \ref{Table2}, we use the previous data to compute the (average) speed of $\xi$, and we compare this result with $  e^{-1/\sqrt{\e}}$: 
we notice the resemblance between the two values, whatever the choice of $\e$.

\begin{table}[h!]
\begin{center}
\begin{tabular}{|c|c|c|c|c|}
\hline  SPEED  &   $\varepsilon=0.03$  &    $\varepsilon=0.01$ &   $\varepsilon=0.005$   \\
 \hline
\hline $\Delta x / \Delta t$ & $1.7*10^{-4}$ &$3*10^{-5}$ &$2.8*10^{-7}$   \\
\hline $ e^{-1/\sqrt{\e}}$ & $0.3*10^{-4}$ &$4*10^{-5}$ &$0.7*10^{-7}$   \\
\hline 
\end{tabular}
\caption{\small{In this table, we compute   $\Delta t:= t_{{}_{\rm{F}}}-t_{{}_{\rm{I}}}$, 
being $t_{{}_{\rm{F}}}$ the time where the interfaces reaches the $x$-value zero and $t_{{}_{\rm{I}}}=100$; of course the value of $t_{{}_{\rm{F}}}$ is different in the three cases. 
The value of $\Delta x$ is computed accordingly, by taking the $x$-values corresponding to these times. }}
\label{Table2}
\end{center}
\end{table}

For comparison, in the following  we numerically compute the location of the shock layer for the {\it linear} equation
\begin{equation}\label{linburg}
\d_t u = \e \d_x^2u - \d_xf( u).
\end{equation} 
In this case, it has been proven in \cite{MS} that the speed rate of convergence of the interface towards the equilibrium is indeed {\it exponentially small} with respect to $\e$; precisely, the authors give the following asymptotic expression for the first eigenvalue $\lambda^\e_1$
$$
\lambda_1^\e \approx%
-\frac{1}{\e} \left( \frac{1}{f'(u_-)}-\frac{1}{f'(u_+)}\right)^{-1} \left( -f'(u_+)e^{f'(u_+)(\ell-\xi)/\e}+f'(u_-)e^{-f'(u_-)(\ell+\xi)/\e}\right),
$$
which, in the case $f(u)=u^2/2$ and $u_{\pm}=\mp u_* = \mp \sqrt{\e}$, gives  $\lambda_1^\e(\xi) \approx e^{-(\ell-|\xi|)/\sqrt\e}$.

Table \ref{Table3} shows the  numerical location of the shock 
layer when considering equation \eqref{linburg}. The initial datum is as in the previous simulations, that is, $u_0(x) = u_*\left(\frac{1}{2}x^2 - x - \frac{1}{2}\right) $ with $u_*=\sqrt{\e}$. We can see that the resemblance with the previous data is significant.
\begin{table}[h!]
\begin{center}
\begin{tabular}{|c|c|c|c|c|cl}
\hline TIME $t$ &   $\xi(t)$, $\varepsilon=0.03$  &   $\xi(t)$, $\varepsilon=0.01$ &  $\xi(t)$, $\varepsilon=0.005$ 
\\ \hline
\hline $10$ & $-0.2915$ &$-0.3307$  &$-0.3499$  \\
\hline  $10^2$ & $-0.1435$ & $-0.3116$  &$-0.3271$  \\
\hline $10^3$ & $-0.0005$ &$-0.2467$ &$-0.3226$   \\
\hline $5\cdot10^3$ & $-0.0025*10^{-5}$ &$-0.1348$  &$-0.3055$  \\
\hline $10^4$ & $-0.0005*10^{-5}$ &$-0.0786$ &$-0.2887$ \\
\hline $5\cdot10^4$ & $0$ &$-0.0020$ &$-0.2188$  \\
\hline $10^5$ & $0$ &$-0.0002*10^{-1}$ &$-0.1779$  \\
\hline $10^6$ & $0$  &$0$ & $-0.0342$ \\
\hline
\end{tabular}
\caption{\small{The numerical location of the shock layer $\xi(t)$ for equation \eqref{linburg}, for different values of the parameter $\varepsilon$.}}
\label{Table3}
\end{center}
\end{table}

Based on these numerical simulations, we thus expect that the estimate \eqref{rough} gives a good qualitative approximation of the order of the first eigenvalue of the linearized operator. 
\smallbreak
In order to give a bound for the second (and subsequent) eigenvalue $\lambda_2^\e$, we follow the approach of \cite{FLMS} 
where the authors approximate $\psi_2^\e$, the second eigenfunction of the adjoint operator $\mathcal L^{\e,*}$, with the second eigenfunction of the operator 
\begin{equation}\label{opiperbolico}
\mathcal N^{\e,*} v := \e \d_x^2 v \pm u_* \d_x v,
\end{equation}
obtained from $\mathcal L^{\e,*}$ by approximating $U^\e$ with 
 \begin{equation*}
	U^0(x;\xi)=u_*\chi_{{}_{(-\ell,\xi)}}(x)-u_*\chi_{{}_{(\xi,\ell)}}(x).
\end{equation*}
In particular, they give the following asymptotic expression for the second eigenfunction $\psi_2^\e$ (for more details, see \cite[Section 4]{FLMS})
\begin{equation*}
	\psi_2^\e(x) \approx \psi_2^0(x)=\begin{cases}
			c_-e^{-u_\ast(x+\ell)/2\e}\sin\left(\sqrt{-4\e\lambda_2^\e-u_\ast^2}(x+\ell)/2\e\right) \qquad & x\leq\xi \\
			c_+e^{u_\ast(x-\ell)/2\e}\sin\left(\sqrt{-4\e\lambda^\e_2-u_\ast^2}(x-\ell)/2\e\right)  & x>\xi,
		\end{cases}
\end{equation*}
where 
\begin{align*}
	c_\pm & = e^{-u_\ast(\xi\pm\ell)/2\e}\sin\left(\sqrt{-4\e\lambda_2^\e-u_\ast^2}(\xi\pm\ell)/2\e\right).
\end{align*}
Hence, $\psi_2^\e$ is defined if and only if
\begin{equation*}
-4\e\lambda^\e_2-u_\ast^2 \geq0 \qquad \Rightarrow \qquad \lambda^\e_2 \leq -{u^2_*}/{4\e},
\end{equation*}
implying that all the eigenvalues $\lambda_k^\e$, $k \geq 2$ are bounded away from zero, as required in assumption {\bf H2}.

\begin{remark}\label{remarkstm}{ \rm

We notice that the size of the first eigenvalue with respect to $\e$ strongly depends on the choice of the boundary values, see \eqref{rough}. 
Indeed, if choosing $u_*=\e$ rather than $\sqrt{\e}$, estimate \eqref{rough} tells us that $\lambda_1^\e\approx\e$ as $\e\to 0$, 
that is, no metastable behavior is observed. 
This is confirmed also by numerical simulations, as we show in Figure \ref{fig7}.
This behavior  is consistent with the stability Theorem \ref{propstabilita1} we proved in Section 3 and with the subsequent comments; 
indeed, if choosing boundary values satisfying \eqref{ipodatibordo2} we can apply Theorem \ref{propstabilita1} and we thus have a {\it fast convergence} towards the equilibrium 
(as confirmed by numerical simulations and by the asymptotic expression of the first eigenvalue \eqref{rough}). 
Conversely, when $\vert u_\pm \vert=\sqrt{\e}$ (hence bigger), we observe a metastable behavior and clearly Theorem \ref{propstabilita1} does not hold.}
\end{remark}

\begin{figure}[ht]
\centering
\includegraphics[width=7cm,height=5.5cm]{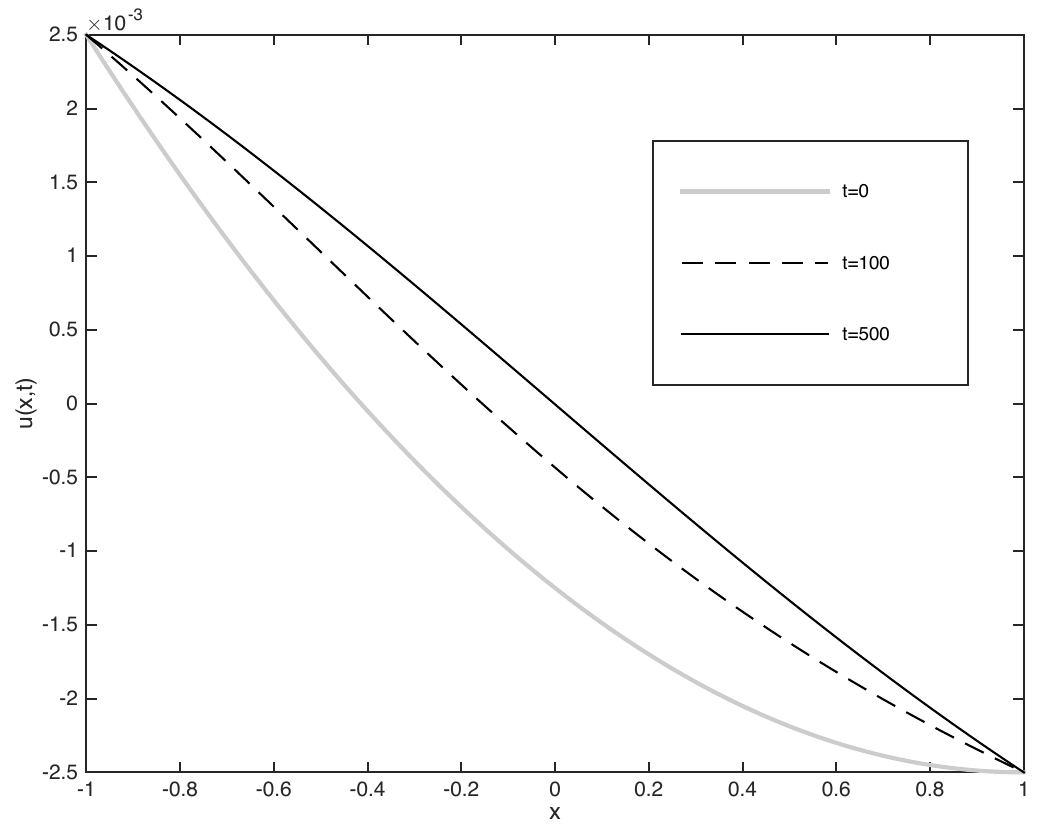}
 \caption{\small{The dynamics of the solution to \eqref{burgers2} for $\e=0.005$, $f(u)=u^2/2$ and $u_*=\e/2$. In this case, the equilibrium is attained in short times. }}\label{fig7}
 \end{figure}

\subsection{Conclusions}
Having proved that assumptions {\bf H1-H2} are satisfied, we now proceed similarly as in \cite{MS, Str17}, to which we refer for detailed computations,
ending up with a coupled system for the variables $(\xi, v)$ that reads
\begin{equation}\label{Svxi}
	\left\{\begin{aligned}
\frac{d\xi}{dt}&=\theta^\varepsilon(\xi)\bigl(1+\langle\partial_{\xi} \psi^\varepsilon_1, v \rangle\bigr)\\
\partial_t v& =  H^\e(x;\xi)+ \mathcal L ^\e v +\mathcal M^\e v,
\end{aligned}\right.
\end{equation}
where
\begin{align*}
\theta^\e(\xi)&:= \langle \psi^\varepsilon_1,\mathcal P^\e[U^{\varepsilon}] \rangle,\\
		H^\varepsilon(\cdot;\xi)&:={\mathcal P}^\varepsilon[U^{\varepsilon}(\cdot;\xi)]
			-\partial_{\xi}U^{\varepsilon}(\cdot;\xi)\,\theta^\varepsilon(\xi),\\
		{\mathcal M}^\varepsilon  v&:=-\partial_{\xi}U^{\varepsilon}(\cdot;\xi)
			\,\theta^\varepsilon(\xi)\,\langle\partial_{\xi} \psi^\varepsilon_1, v \rangle,
\end{align*}
and $\psi^\varepsilon_k$ is the $k$-th eigenfunction of the adjoint operator $\mathcal L^{\e,*}$.
The following theorem provides an estimate for the perturbation $v$ to be used in the ODE for the variable $\xi$ to decouple the system.
\begin{theorem}\label{teo56}
Let $v$ be the solution to \eqref{Svxi}  and let assumptions {\bf H1-H2} be satisfied. 
Then there exist $c,T > 0$ such that, for all $t<T$, it holds that
\begin{equation}\label{stimav}
\|v\|_{L^2}(t) \leq  \|v_0\|_{{}_{L^2}} e^{\nu^\e t}+ c \,  |\Omega^\e|_{{}_{{}_{L^\infty}}}t, \quad \nu^\e := c|\Omega^\e|_{{}_{L^\infty}}-\sup_{\xi} |\lambda_1^\e(\xi)|.
\end{equation}
\end{theorem}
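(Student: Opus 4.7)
The plan is to close a standard $L^2$ energy estimate for $v$ and then integrate a scalar Grönwall inequality. Testing the second equation of \eqref{Svxi} against $v$ in $L^2(I)$ gives
\begin{equation*}
\frac12\frac{d}{dt}\|v\|_{L^2}^2=\langle v,H^\e\rangle+\langle v,\mathcal L^\e v\rangle+\langle v,\mathcal M^\e v\rangle,
\end{equation*}
and the remaining work consists in estimating the three terms in a way that lets $|\Omega^\e|_{L^\infty}$ and $|\lambda_1^\e|$ be read off.

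For the forcing term, I would first use the definition of $H^\e$, the bound $|\theta^\e(\xi)|=|\langle\psi_1^\e,\mathcal P^\e[U^\e]\rangle|\leq c|\Omega^\e|_{L^\infty}$ coming from assumption {\bf H1}, and the uniform boundedness of $\partial_\xi U^\e$ and $\psi_1^\e$ (for the smoothed family of Remark \ref{approssimataU}), to obtain $\|H^\e\|_{L^2}\leq c|\Omega^\e|_{L^\infty}$; Cauchy--Schwarz then gives $|\langle v,H^\e\rangle|\leq c|\Omega^\e|_{L^\infty}\|v\|_{L^2}$. For the main operator term, I would exploit the self-adjointness of $\mathcal L^\e$ in the weighted space $L^2_\rho$ established in \eqref{Lagrangeidentity} and the spectral decomposition from Proposition \ref{SL}: expanding $v=\sum_k c_k\varphi_k^\e$ in the $L^2_\rho$-orthonormal eigenbasis gives
\begin{equation*}
\langle v,\mathcal L^\e v\rangle_\rho=\sum_k\lambda_k^\e c_k^2\leq \lambda_1^\e(\xi)\,\|v\|_{L^2_\rho}^2\leq -\sup_{\xi\in I}|\lambda_1^\e(\xi)|\,\|v\|_{L^2_\rho}^2,
\end{equation*}
using that all $\lambda_k^\e$ are negative and that $\lambda_1^\e$ is the largest. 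Since $\rho$ is continuous, positive and bounded above and below uniformly in $\e,\xi$ (its definition involves $U^\e$ and $\partial_x U^\e$, both uniformly bounded by Proposition \ref{prop:aprioriux} and the smoothed construction), the weighted and unweighted $L^2$-norms are equivalent and the same bound (up to a harmless constant absorbed into $c$) holds in $L^2(I)$. For the last term, the explicit form $\mathcal M^\e v=-\partial_\xi U^\e\,\theta^\e(\xi)\,\langle\partial_\xi\psi_1^\e,v\rangle$, together with $|\theta^\e|\leq c|\Omega^\e|_{L^\infty}$ and Cauchy--Schwarz applied twice, yields $|\langle v,\mathcal M^\e v\rangle|\leq c|\Omega^\e|_{L^\infty}\|v\|_{L^2}^2$.

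Putting the three bounds together produces
\begin{equation*}
\frac12\frac{d}{dt}\|v\|_{L^2}^2\leq \bigl(c|\Omega^\e|_{L^\infty}-\sup_{\xi}|\lambda_1^\e(\xi)|\bigr)\|v\|_{L^2}^2+c|\Omega^\e|_{L^\infty}\|v\|_{L^2},
\end{equation*}
so dividing by $\|v\|_{L^2}$ gives the scalar differential inequality $\tfrac{d}{dt}\|v\|_{L^2}\leq \nu^\e\|v\|_{L^2}+c|\Omega^\e|_{L^\infty}$. Integrating by means of the standard comparison principle for ODEs, one obtains $\|v\|_{L^2}(t)\leq \|v_0\|_{L^2}e^{\nu^\e t}+c|\Omega^\e|_{L^\infty}\int_0^t e^{\nu^\e(t-s)}\,ds$, and on the metastable time window $t<T$ on which $\nu^\e t$ stays bounded the convolution integral is $\mathcal O(t)$, giving exactly \eqref{stimav}.

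The main obstacle I foresee is not any single one of the three bounds, but the fact that $\mathcal L^\e$, $\rho$, $\psi_1^\e$ and the eigenvalues $\lambda_k^\e$ all depend on $\xi=\xi(t)$, so the spectral picture from Proposition \ref{SL} has to be used \emph{uniformly in $\xi$}; this is exactly what the $\sup_\xi$ appearing in $\nu^\e$ encodes. A secondary technical point is that for the piecewise-defined $U^\e$ of \eqref{Uapprox} the quantity $\mathcal P^\e[U^\e]$ is a Dirac mass at $x=\xi$, so the $L^2$ estimate on $H^\e$ is only meaningful if one works with the smooth approximation of $U^\e$ introduced in Remark \ref{approssimataU}, absorbing the mollification error into the constant multiplying $|\Omega^\e|_{L^\infty}$.
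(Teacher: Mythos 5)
Your route is genuinely different from the paper's: the paper never tests the equation against $v$, but instead invokes the evolution-system machinery of Pazy, bounding $\mathcal M^\e$ as a bounded operator with $\|\mathcal M^\e\|\le c|\Omega^\e|_{{}_{L^\infty}}$, noting that the family $\{\mathcal L^\e+\mathcal M^\e\}_{\xi(t)}$ is stable with stability constant $-|\Lambda_1^\e|+c|\Omega^\e|_{{}_{L^\infty}}$ (where $\Lambda_1^\e:=\sup_\xi\lambda_1^\e(\xi)$), and reading \eqref{stimav} off the Duhamel representation $v(t)=\mathcal T(t,s)v_0+\int_s^t\mathcal T(t,r)H^\e\,dr$ together with $\|\mathcal T(t,s)\|\le c\,e^{\nu^\e(t-s)}$. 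Your energy-method alternative would be more elementary if it closed, and your treatment of $H^\e$, of $\mathcal M^\e$, and the final Gronwall/integration step are all fine and parallel to the paper's. But the central step has a genuine gap.

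The gap is the transfer of the spectral bound from the weighted to the unweighted pairing. Proposition \ref{SL} lives in $L^2_\rho$, and what it gives you is $\langle v,\mathcal L^\e v\rangle_\rho\le\lambda_1^\e(\xi)\|v\|^2_{L^2_\rho}$; your energy identity, however, involves $\langle v,\mathcal L^\e v\rangle=\langle v/\rho,\mathcal L^\e v\rangle_\rho$, which is a \emph{different} bilinear form. Since the integrand $v\,\mathcal L^\e v$ has no pointwise sign, multiplying it by a positive weight can change the sign and the size of the integral, so "norm equivalence" does not convert the first inequality into $\langle v,\mathcal L^\e v\rangle\le -c\sup_\xi|\lambda_1^\e(\xi)|\,\|v\|^2_{L^2}$. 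The natural repair --- running the whole estimate in $L^2_\rho$ --- meets two further obstacles. First, $\rho$ depends on $\xi(t)$, so $\frac{d}{dt}\|v\|^2_{L^2_\rho}$ acquires the extra term $\int_I v^2\,\d_t\rho\,dx$; this one is harmless, being of order $\theta^\e=O(|\Omega^\e|_{{}_{L^\infty}})$, and can be absorbed into $\nu^\e$. Second, and more seriously, your claim that $\rho$ is bounded above and below \emph{uniformly in $\e$} is false: $\rho=\exp\bigl(-\e^{-1}\int_{x_0}^x U^\e(1+(\d_xU^\e)^2)^{3/2}\,dy\bigr)$ with $|U^\e|\sim u_*=\sqrt\e$, so the ratio of $\max\rho$ to $\min\rho$ is of order $e^{C\ell/\sqrt\e}$ --- the same exponential order as the quantities $|\Omega^\e|_{{}_{L^\infty}}$ and $\sup_\xi|\lambda_1^\e(\xi)|$ that \eqref{stimav} is designed to track. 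A norm-equivalence constant of that size, whether it enters mid-estimate or when converting $\|v\|_{L^2_\rho}$ back to $\|v\|_{L^2}$ at the end, makes $\nu^\e$ and the source term in \eqref{stimav} uncontrolled and useless for the subsequent metastability argument. This is precisely the difficulty the paper sidesteps by working with the operator-norm bound $\|\mathcal S_{\xi(t)}(s)\|\le e^{-|\Lambda_1^\e|s}$ directly in the unweighted $L^2$.
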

\begin{proof}
The proof relies on the semigroup theory for linear operators depending on time developed in \cite{Pazy}, and it is an adaptation of the  one of \cite[Theorem 3.4]{Str17}; 
we report here the major modifications of the argument, referring the reader to the discussion and the definitions given in \cite[Section 3.1]{Str17}. 

First of all, by their very definitions we can state that $\mathcal M^\varepsilon$ is a bounded operator that satisfies the estimate
\begin{equation*}
	\|\mathcal M^\varepsilon\|_{\mathcal L(L^2;\R)} \leq c \, \|\theta^\varepsilon\|_{{}_{L^\infty}} \leq c \,  |\Omega^\varepsilon|_{{}_{L^\infty}}, 
\end{equation*}
and $H^\varepsilon$ is such that
\begin{equation}\label{asyMH2}
	\|H^\varepsilon(\cdot;\xi)\|_{{}_{L^\infty}} \leq c \, | \Omega^\varepsilon|_{{}_{L^\infty}}.
\end{equation}
Next, concerning the linear operator $\mathcal L^\e$, if we define $\Lambda_1^\varepsilon:=\sup_\xi \lambda_1^\varepsilon(\xi) <0$ we have 
$\sigma(\mathcal L^\e)\subset(-\infty,-|\Lambda_1^\varepsilon|)$ and $\mathcal L^\e$ is the infinitesimal generator of a $C^0$ semigroup $\mathcal S_{\xi(t)}(s)$, $s>0$, such that
\begin{equation}\label{stimaPazy}
	\| \mathcal S_{\xi(t)}(s)\| \leq e^{-|\Lambda_1^\varepsilon|s}, \qquad \forall \ t \geq 0.
\end{equation}
Hence, for any $\xi(t) \in I$,  the family $\{ \mathcal L^\varepsilon \}_{{}_{\xi(t) \in I}}$ is stable with stability constant $-|\Lambda_1^\varepsilon|$, 
implying that the family $\{ \mathcal L^\varepsilon + \mathcal M^\varepsilon \}_{{}_{\xi(t) \in I}}$ is stable with stability constant $ -|\Lambda_1^\varepsilon|+{ c \, }|\Omega^\varepsilon|_{{}_{L^\infty}}$.

We can now define $\mathcal T(t,s)$ as the {\it evolution system} of the linear equation $\partial_t w=(\mathcal L^\varepsilon+ \mathcal M^\varepsilon)w$,
so that we can rewrite the solution to \eqref{Svxi} as
\begin{equation}\label{Yfamiglieevol}
	v(t)=\mathcal T(t,s)v_0+\int_s^t \mathcal T(t,s)H^\varepsilon(x;\xi(r)) dr, \quad 0 \leq s \leq t  
\end{equation} 
and, because of \eqref{stimaPazy}, it holds
\begin{equation*}
	\|\mathcal T(t,s)\| \leq c \, e^{\nu^\varepsilon (t-s)}, \qquad \nu^\varepsilon :={c \, }|\Omega^\varepsilon|_{{}_{L^\infty}}- |\Lambda^\varepsilon_1|.
\end{equation*}
Hence, from the representation formula \eqref{Yfamiglieevol} it follows that
\begin{equation*}
	\|v\|_{{}_{L^2}}(t) \leq e^{\nu^\varepsilon t}\|v_0\|_{{}_{L^2}}+ \sup_{\xi \in I}\|H^\varepsilon(\cdot;\xi)\|_{{}_{L^\infty}}\int_0^te^{\nu^\varepsilon(t-s)} \ ds, \quad \forall \, t \geq 0,
\end{equation*}
from which, using \eqref{asyMH2}, we end up with \eqref{stimav} and the proof is complete.
\end{proof}

\begin{remark}{\rm
For the precise statements of the theorems we used in the proof of Theorem \ref{teo56}, we refer the reader to \cite[Section 5]{Pazy} and, in particular, to Definition 2.1, Theorem 2.3, Theorem 3.1 and Theorem 4.2 therein. 
}
\end{remark}
Since the terms $|\Omega^\e|_{{}_{L^\infty}}$ and $\sup_{\xi} |\lambda_1^\e(\xi)|$ are exponentially small as $\e\to0^+$, the bound obtained in \eqref{stimav} shows that $v$ is very small for large times and small $\e$, provided that $\|v_0\|_{{}_{L^2}}$ is small enough; 
such estimate can be used in the ODE for the variable $\xi(t)$, leading to
\begin{equation*}
	\frac{d\xi}{dt} = \theta^\e(\xi)(1+r), \quad \mbox{with} \quad |r|\leq C \left( \|v_0\|_{{}_{L^2}}e^{\nu^\e t} + t|\Omega^\e|_{{}_{L^\infty}}\right).
\end{equation*}
As $r$ is exponentially small as $\e\to0^+$, the motion of the interface location $\xi$ is described by the ODE
\begin{equation}\label{eq:xi}
	\frac{d\xi}{dt} = \theta^\e(\xi).
\end{equation}
To describe the properties of the solutions to the equation \eqref{eq:xi} with initial datum $\xi(0)=\xi_0$, we briefly discuss the properties of the function $\theta^\e$.
By the definition
\begin{equation*}
	\theta^\e(\xi)=\langle \psi^\varepsilon_1,{\mathcal P^\e[U^{\varepsilon}] \rangle}= \psi^\varepsilon_1(\xi)\left(\k_-(\xi)-\k_+(\xi)\right).
\end{equation*}
We thus need an asymptotic expression for the first eigenfunction $\psi_1^\e$ of $\mathcal L^{\e,*}$ as $\e\to0^+$; 
to this end, we again approximate $\psi_1^\e$ with the eigenfunction of $\mathcal N^{\e,*} $  (defined in \eqref{opiperbolico}) relative to the eigenvalue $\lambda_1^0=0$, that solves
\begin{equation*}
	\left\{\begin{aligned}
	&\d_x^2 \psi_1^0 \pm u_* \d_x \psi_1^0=0,		\\
	&\psi_1^0(\pm\ell)=0, \quad [ \![ \psi_1^0 ] \!]_{x=\xi}=0,
	\end{aligned}\right.
\end{equation*}
where $[ \![ \cdot ] \!]$ denotes the jump.     By solving such boundary value problem, we obtain
\begin{equation*}
	 \psi_1^0(x)=
		\left\{\begin{aligned}
			&C\,(1-e^{-u_*(\ell-\xi)/\varepsilon})(1-e^{-u_*(\ell+x)/\varepsilon}),
					&\qquad	&x<\xi,\\
			&C\,(1-e^{-u_*(\ell+\xi)/\varepsilon})(1-e^{-u_*(\ell-x)/\varepsilon}),
					&\qquad	&x>\xi,		
		\end{aligned}\right.
\end{equation*}
so that $\psi_1^0(\xi)\approx C>0$ as $\e \to 0^+$ for any $\xi\in(-\ell,\ell)$. 
Hence, the behavior of $\theta^\e$ as $\e \to 0$ is dictated by the one of the difference $g(\xi):=\k_-(\xi)-\k_+(\xi)$.
We have already studied the properties of the function $g$;
it is a monotone decreasing function and there exists a unique $\bar\xi\in(-\ell,\ell)$ such that $g(\bar\xi)=0$.
This implies that
\begin{equation*}
(\xi-\bar\xi)\theta^\e(\xi)<0 \quad \forall\,\xi\neq\bar\xi,  \qquad \mbox{ and } \qquad  \theta^\e(\bar\xi)=0.
\end{equation*}
Therefore, $\xi(t)$ is a monotone function which converges to the unique equilibrium position $\bar\xi$ as $t\to+\infty$ and, as a consequence, 
the interface moves towards the right (resp., left) if $\xi_0\! \in\! (-\ell,\bar\xi)$ (resp., $\xi_0 \! \in \! (\bar\xi, \ell)$). 
In terms of the original solution $u(x,t)=U^\e(x;\xi(t))+v(x,t)$ to \eqref{burgers2}, 
we have that the solution $u$ is converging to $U^\e(x;\bar\xi)$ for large times, being $U^\e(x;\bar\xi)$ the unique steady state for the system;
however, the speed rate of such convergence is dictated by the speed rate of the convergence of $\xi(t)$ towards $\bar \xi$ and so by the magnitude of $\theta^\e$.
As already seen, the difference $\k_-(\xi)-\k_+(\xi)$ is exponentially small as $\e\to0^+$ if the flux function $f$ and the boundary data are properly chosen. 
For instance, in the case of a Burgers flux $f(u)=u^2/2$, for which we recall that $u_\pm=\mp u_*$ and $\bar\xi=0$, one has
\begin{equation*}
|\theta^\e(\xi)|\leq c_1 u_*\,e^{-c_2u_*(\ell-|\xi|)/\e}.
\end{equation*}
In particular, if $u_*=\sqrt\e$, then
\begin{equation*}
|\theta^\e(\xi)|\leq c_1\sqrt\e\,e^{-c_2(\ell-|\xi|)/\sqrt\e},
\end{equation*}
and the velocity of the interface is exponentially small as $\e\to0^+$,
leading to a metastable behavior. 

On the contrary, if $ 0<u_*\leq \e$, then $\theta^\e$ (that is, the speed rate of convergence of the solutions towards the steady state) will be no longer exponentially small in $\e$: 
in particular, no metastability is observed in this case. 
This is consistent with the stability Theorem \ref{propstabilita1}, which states that in this case we have a fast convergence towards the equilibrium (see also Remark \ref{remarkstm}).

\smallbreak 
We conclude this paper by showing a numerical solution to the problem \eqref{burgers2} with $f(u)=u^2/2$ and 
a smooth initial datum $u_0$ which does not satisfy the smallness condition \eqref{ipodatibordo} (see Figure \ref{fig:discon}). 
In a relatively short time the numerical solution becomes discontinuous (as shown in the left picture of Figure \ref{fig:discon}, to be compared with \cite{GooKurRos}, where the same behavior has been observed);
after that it evolves very slowly and converges to the discontinuous steady state after a very long time.
Therefore, we still observe (at least numerically) a metastable behavior also in the case of discontinuous solutions.
 
\begin{figure}[ht]
\centering
\includegraphics[width=6.9cm,height=5.5cm]{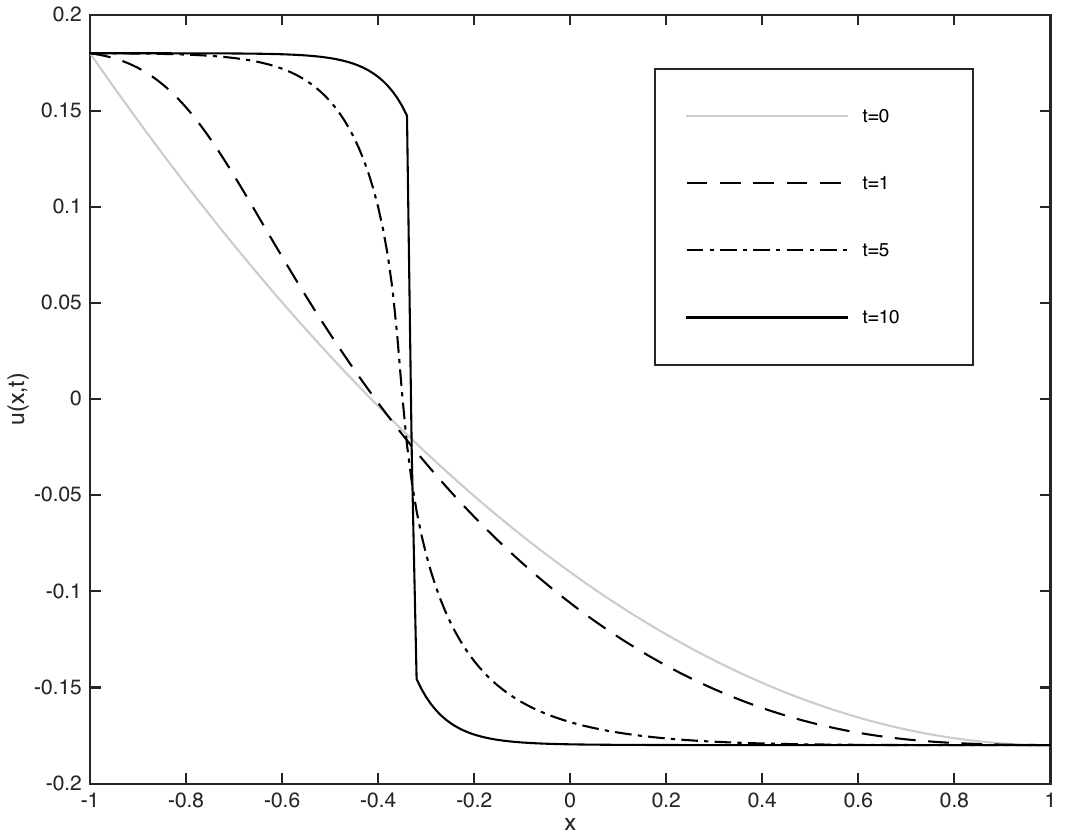}
 \hspace{3mm}
\includegraphics[width=7cm,height=5.45cm]{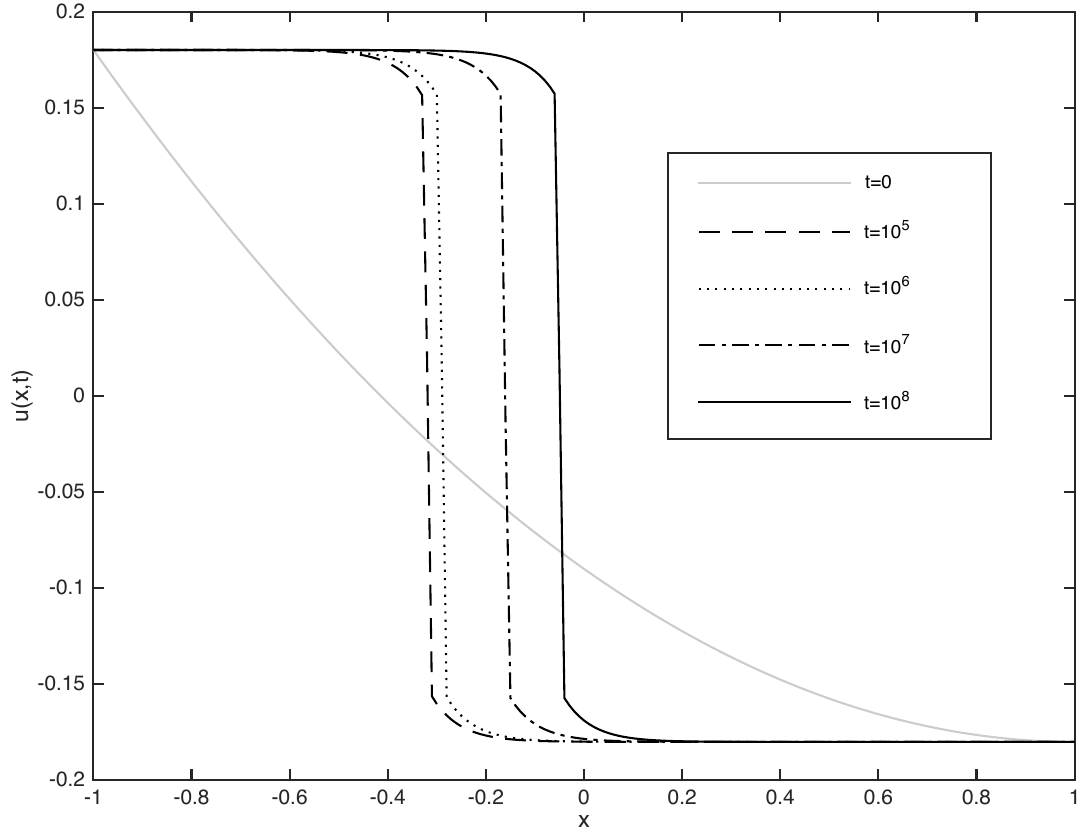}
 \hspace{3mm}
 \caption{\small{The dynamics of the solution to \eqref{burgers2} with $\e=0.01$, $u_\pm = \mp1.8 \sqrt{\e}$ and initial datum $u_0(x)=1.8\sqrt{\e}\left(\frac{1}{2}x^2-x-\frac{1}{2}\right)$.
 We observe a \emph{metastable} convergence of the solution to a discontinuous steady state.}}\label{fig:discon}
 \end{figure}

\section*{Acknowledgments} This is a pre-print of an article published in Journal of Evolution Equations. 
The final authenticated version is available online at: https://doi.org/10.1007/s00028-019-00528-2. 

We thank the anonymous referee for her/his comments that helped to improve the paper.

\end{document}